\documentclass[12pt, american, reqno]{amsart} 
\usepackage{amssymb,amsthm,amsmath}
\usepackage{multirow}
\usepackage{enumerate}
\usepackage{ifthen}
\usepackage{bbm}
\usepackage{cite}
\usepackage{graphicx} 
\baselineskip=0pt
\baselineskip=0pt
\usepackage{tkz-euclide}
\usepackage[colorlinks,citecolor=blue,hypertexnames=false]{hyperref}

\textheight 22.5truecm \textwidth 14.5truecm
\setlength{\oddsidemargin}{0.50in}\setlength{\evensidemargin}{0.50in}

\setlength{\topmargin}{1.2cm}
\pagestyle{plain} \numberwithin{equation}{section}

\usepackage{amsfonts}
\usepackage{amscd}
\usepackage{amssymb}
\usepackage{enumerate}
\usepackage{graphicx}
\allowdisplaybreaks
\usepackage{mathabx}
\usepackage{color}
\usepackage{amsbsy}
\usepackage{graphicx}
\usepackage{amsthm}
\usepackage{amsmath}
\usepackage{amsxtra}
\usepackage{mathrsfs}
\usepackage{bbm}
\usepackage{dsfont}

\usepackage{ifthen}
\usepackage{xcolor,colortbl}



\vfuzz2pt 
\hfuzz2pt 
\newtheorem{theorem}{Theorem}[section]

\newtheorem{prop}[theorem]{Proposition}

\theoremstyle{definition}

\theoremstyle{remark}
\newtheorem{rem}[theorem]{Remark}
\numberwithin{equation}{section}
\definecolor{red}{rgb}{1.0, 0.0, 0.0}
\setlength{\textwidth 6.5in} \setlength{\textheight 9.0in}
\voffset -0.7in \hoffset -0.6in

\newcommand{\Bea}{\begin{eqnarray*}}
	\newcommand{\Eea}{\end{eqnarray*}}
\newcommand{\Be} {\begin{equation*}}
	\newcommand{\Ee} {\end{equation*}}
\newcommand{\be} {\begin{equation}}
	\newcommand{\ee} {\end{equation}}
\newcommand{\bea} {\begin{eqnarray}}
	\newcommand{\eea} {\end{eqnarray}}


\newcommand{\G}{\mathbb G}

\newcommand{\g}{\mathfrak{g}}



%
{\qed\bigskip}

\newcounter{alphabet}


\ifx\undefined\bysame
\newcommand{\bysame}{\leavevmode\hbox to3em{\hrulefill}\,}
\fi
\usetikzlibrary{patterns}

\markboth{} {}

\title[ Higher order hypoelliptic damped wave equations]
{Higher order hypoelliptic damped wave equations on   graded Lie  groups with data from negative order Sobolev spaces:  the critical case}

\author{Vishvesh Kumar} 

\address{Vishvesh Kumar  \endgraf Department of Mathematics: Analysis, Logic and Discrete Mathematics	\endgraf Ghent University \endgraf Krijgslaan 281, Building S8,	B 9000 Ghent, Belgium.} \email{Vishvesh.Kumar@UGent.be and vishveshmishra@gmail.com}

\author{Shyam Swarup Mondal} \address{Shyam Swarup Mondal 
	\endgraf Stat-Math unit,
	Indian Statistical Institute Kolkata \endgraf 
	BT Road,  Baranagar, Kolkata 700108, India}
\email{mondalshyam055@gmail.com}

\author{Michael Ruzhansky} \address{Michael Ruzhansky  \endgraf Department of Mathematics: Analysis, Logic and Discrete Mathematics\endgraf Ghent University\endgraf Krijgslaan 281, Building S8,	B 9000 Ghent, Belgium\endgraf and\endgraf School of Mathematical Sciences\endgraf Queen Mary University of London, United Kingdom} \email{michael.ruzhansky@ugent.be}
\author{Berikbol T. Torebek} 

\address{Berikbol T. Torebek  \endgraf Department of Mathematics: Analysis, Logic and Discrete Mathematics	\endgraf Ghent University \endgraf Krijgslaan 281, Building S8,	B 9000 Ghent, Belgium \endgraf and\endgraf Institute of
	Mathematics and Mathematical Modeling \endgraf 28 Shevchenko str.,
	050010 Almaty, Kazakhstan.} \email{berikbol.torebek@ugent.be}

\keywords{Graded Lie groups, Rockland operators, Semilinear damped wave equation,  Critical exponent,  Negative order Sobolev spaces,  Global existence} \subjclass[2020]{Primary 43A80, 35L15,   35L71, 35A01; Secondary  35L15, 35B33, 35B44}
\date{\today}
\begin{document}
	\allowdisplaybreaks
	
	\begin{abstract} 	
		Let $\mathbb G$ be a graded Lie group with homogeneous dimension $Q$. In this paper, we study the Cauchy problem for a semilinear hypoelliptic damped wave equation involving a positive Rockland operator $\mathcal{R}$ of homogeneous degree $\nu\geq 2$ on $\mathbb G$  with power type nonlinearity $|u|^p$ and initial data taken from negative order homogeneous Sobolev space $\dot H^{-\gamma}(\mathbb G), \gamma>0,$ for the critical exponent case $p=1+\frac{2\nu}{Q+2\gamma}.$  We also explore the diffusion phenomenon of the higher order hypoelliptic damped wave equations on graded Lie groups with initial data belonging to Sobolev spaces of negative order. We emphasize that our results are also new, even in the setting of higher-order differential operators on $\mathbb{R}^n$,  and more generally, on stratified Lie groups.

	\end{abstract}
	\maketitle
	\tableofcontents

	\section{Introduction and main results}
	We  investigate the behavior of the solutions to the Cauchy problem  for a semilinear damped wave equation with the power  type nonlinearities of the form 
	\begin{align} \label{eq0010}
		\begin{cases}
			u_{tt}+\mathcal{R}u +u_{t} =|u|^p, & x\in  \mathbb{G},~t>0,\\
			u(0,x)=\varepsilon u_0(x),  & x\in  \mathbb{G},\\ u_t(0, x)=\varepsilon u_1(x), & x\in  \mathbb{G},
		\end{cases}
	\end{align}
	in the critical case $p=p_{\text{Crit}}(Q, \gamma, \nu):=1+\frac{2\nu}{Q+2\gamma},$ 	where $\mathcal{R}$ is a positive Rockland operator of homogeneous degree $\nu \geq 2$ on a graded Lie group $\mathbb{G}$,  and the initial data $(u_0, u_1)$ with its size parameter $\varepsilon>0$  belongs to  homogeneous Sobolev spaces of negative order $  \dot {H}^{-\gamma}(\mathbb{G}) \times  \dot {H}^{-\gamma}(\mathbb{G})$ with $\gamma>0$.  This work is a continuation of the work \cite{DKMR24} of the first three authors with Dasgupta in which we have discussed the subcritical and supercritical damped wave equation associated with Rockland operators on graded Lie groups with the initial data belonging to homogeneous Sobolev spaces of negative order.
	
	Recall that a connected and simply connected Lie group $\mathbb{G}$ is a graded Lie group if its Lie algebra $\mathfrak{g}$ is {\it graded}, that is, $\mathfrak{g}$ admits a vector space decomposition of the form $\mathfrak{g}= \bigoplus_{i=1}^\infty \mathfrak{g}_i,$ for which all but finitely many $\mathfrak{g}_i$'s are $\{0\}$ such that $[\mathfrak{g}_i, \mathfrak{g}_j] \subset \mathfrak{g}_{i+j}$ for all $i, j \in \mathbb{N}.$ We refer to Section \ref{sec2} for a detailed description of the graded Lie groups.  If the first stratum $\mathfrak{g}_1$  generates the Lie algebra $\g$ as an algebra, the group $\mathbb{G} $ is called a {stratified Lie group}. In this case, the sum of squares of a basis of vector fields in $\mathfrak{g}_1$ gives a sub-Laplacian on $\mathbb{G}$. This immediately shows that every stratified Lie group is graded. However, if the group $\mathbb{G}$ is non-stratified, then it may not have a homogeneous sub-Laplacian or Laplacian but they always possess Rockland operators. 	A Rockland operator on $\mathbb{G}$ is a left-invariant hypoelliptic differential operator of a positive homogeneous degree $\nu$, see Subsection \ref{Rocksec} for an overview.  
	The Heisenberg group, more generally, $H$-type groups, Engel groups, and Cartan groups are examples of graded Lie groups. The following are some examples of graded Lie groups with a Rockland operator which are included in the analysis of this paper.
	
	\begin{itemize}
		\item When $\mathbb{G}=(\mathbb{R}^n,+),$ a Rockland operator $ \mathcal{R}$ can be any positive homogeneous elliptic differential operator with constant coefficients, for example, we can  consider
		\begin{align}\label{R^n}
			\mathcal{R}=(-\Delta)^m \text { or } \mathcal{R}=(-1)^m \sum_{j=1}^n a_j\left(\frac{\partial}{\partial x_j}\right)^{2 m}, \quad a_j>0, m \in \mathbb{N},
		\end{align}
		which are Rockland operators with homogeneous degree $2m$ when the commutative group $\mathbb{R}^n$ is equipped with isotropic dilations. 
		\item When  $\mathbb{G}=\mathbb{H}^n,$ the Heisenberg group, we can consider the Rockland operator of the homogeneous degree $2m$ as  
		$$ 		\mathcal{R}=(-\mathcal{L})^m \text { or } \mathcal{R}=(-1)^m \sum_{j=1}^n\left(a_j X_j^{2 m}+b_j Y_j^{2 m}\right), \quad a_j, b_j>0, m \in \mathbb{N},
		$$		where $X_j=\partial_{x_j}-\frac{y_j}{2} \partial_t,  Y_j=\partial_{y_j}+\frac{x_j}{2} \partial_t$ are the    left-invariant vector fields for its algebra $\g$ and     	$\mathcal{L}=\sum_{j=1}^n\left(X_j^2+Y_j^2\right)$  is the sub-Laplacian on    $\mathbb{H}^n$.\\
		\item When  $\mathbb{G}$ is a stratified Lie group,  then $\mathcal{L}_{\mathbb G}$, defined in  (\ref{stratified})  
		is  a positive Rockland operator with    homogeneous degree $\nu = 2$.\\
		\item When $\mathbb{G}$ is a graded Lie group with dilation weights $\nu_1, \ldots, \nu_n$,  if $\nu_0$ is any common multiple of $\nu_1, \ldots, \nu_n$, then the operators given by 
		\begin{equation} \label{rockgragd}
			\mathcal{R}:= \sum_{j=1}^n(-1)^{\frac{v_0}{v_j}} a_j X_j^{2 \frac{v_0}{v_j}}, \quad \text{with}\,\, a_1, a_2, \ldots, a_n>0,
		\end{equation}
		are  positive Rockland operators of homogeneous degree $\nu=2\nu_0$  	for any strong Malcev basis $\{X_1, X_2, \ldots, X_n\}$ of the Lie algebra $\g$.
	\end{itemize}
	
	The analysis of the semilinear damped wave equation is related to the analysis of the semilinear heat equation due to its diffusive nature \cite{BF89,RG98, HKN04}.  The semilinear heat equation on the Heisenberg group was first time studied by Zhang \cite{Zhang1} extending the seminal works of Fujita \cite{Fujita66}. He proved that the Fujita exponent for the semilinear heat equation of the Heisenberg group is $$p_{\text{Fuji}}(Q):=1+\frac{2}{Q},$$ where $Q$ is the homogeneous dimension of the Heisenberg group. In \cite{Pas98} Pascucci extended the above result to stratified Lie groups. We refer to \cite{Yang, KTT20}  for the study of the Fujita exponent for heat equations associated with Rockland operators on graded Lie groups, and to \cite{RY} for the Fujita exponent of the semilinear heat equation related to the sub-Laplacian on general unimodular Lie groups and sub-Riemannian manifolds.     In their work, Georgiev and Palmieri \cite{Vla} explored the global existence and nonexistence results for the Cauchy problem associated with the semilinear damped wave equation with $L^1$-initial data on the Heisenberg group, focusing on nonlinearities of the power type, such as $|u|^p$. They identified the critical exponent as the Fujita exponent $p_{\text{Fuji}}(Q):=1+\frac{2}{Q}$, which serves as the threshold for determining whether global-in-time Sobolev solutions exist with small data or whether local-in-time weak solutions will blow up. The analogous critical exponent for the semilinear damped wave equation with power-type nonlinearities in the Euclidean setting has been investigated in works such as \cite{IKeta and Tanizawa, Matsumura, Zhang, Todorova}, among others. Furthermore, Palmieri \cite{Palmieri2020} derived $L^2$-decay estimates for solutions to the homogeneous linear damped wave equation on the Heisenberg group, as well as for their time derivatives and horizontal gradients. In \cite{30}, the authors studied the Cauchy problem for the semilinear damped wave equation involving the Rockland operator on the graded Lie groups with power-like nonlinearities and established the global-in-time well-posedness for small data in the presence of positive mass and damping terms. 
	
	In recent years, considerable attention has been devoted by several researchers to finding new critical exponents for the semilinear damped wave equations in different frameworks. For instance, in the Euclidean setting, for the semilinear damped wave equations with initial data belonging to $L^m$-space with  $m \in(1,2]$, the modified critical exponent becomes $$p_{\text {Crit}}(n):=p_{\text {Crit }}\left(\frac{n}{m}\right)=1+\frac{2 m}{n}.$$ For complete detail of global (in-time) existence for small data solutions and blow-up solutions, we refer to  \cite{Ikeda2002,Ikeda2019,Nakao93} and reference therein. In contrast to the $L^1$-case \cite{Zhang}, it was observed that for $L^m, m\in (1, 2]$ data, the critical exponent $p_{\text {Crit}}(n):=p_{\text {Crit }}\left(\frac{n}{m}\right)=1+\frac{2 m}{n},$ belongs to the small data global existence case. Since the Hardy-Littlewood-Sobolev inequality implies that $L^m(\mathbb{R}^n)\hookrightarrow \dot{H}^{-\gamma}(\mathbb{R}^n)$ for $\gamma:= n \left( \frac{1}{m}-\frac{1}{2} \right) \in [0, \frac{n}{2}),$ several researchers began the analysis of semilinear evolution equations with initial data from Sobolev spaces of negative orders, we refer to \cite{GW, Reissig, Dcri, DKMR, TZZ24, DKMR24} and references therein. Specifically, Chen and Reissig \cite{Reissig} studied the semilinear damped wave equation (\ref{eq0010}) with $\G=\mathbb{R}^n$ and $\mathcal{R}=- \Delta_{\mathbb{R}^n}$ with initial data additionally belonging to homogeneous Sobolev spaces $\dot H^{-\gamma}(\mathbb{R}^n)$ of negative order  $-\gamma$. In this case, they found a new critical exponent given by $$p_{\text{Crit}}(n, \gamma):= 1+ \frac{4}{n+2\gamma}$$ for some $\gamma\in (0, \frac{n}{2}).$ This exponent can be seen as a generalization of the second critical exponent in the sense of Lee and Ni \cite{LN} derived considering $L^m$-initial data with $m \in (1, 2].$ The behavior of the solution at the critical exponent  $p_{\text{Crit}}(n, \gamma):= 1+ \frac{4}{n+2\gamma}$ was recently examined in \cite{Dcri}.  The analysis of damped wave equations on the Heisenberg group in the framework of negative order Sobolev space has been carried out in \cite{DKMR, Dcri}.

	In \cite{DKMR24}, the first three authors with Dasgupta initiated the analysis of the semilinear hypoelliptic damped wave equation (\ref{eq0010})  associated with a positive Rockland operator $\mathcal{R}$  on a graded Lie group $\G$ of homogeneous dimension $Q \leq 6$   with power type nonlinearity $|u|^p$ and initial data taken from negative order homogeneous Sobolev space $\dot H^{-\gamma}(\mathbb G), \gamma>0$. We found that the exponent 
	$$p=p_{\text {Crit }}(Q, \gamma, \nu):=1+\frac{2\nu}{Q+2\gamma}$$
	is the new critical exponent of the damped wave equation \eqref{eq0010} in the sense that 
	\begin{itemize}
		\item    global-in-time existence of small data Sobolev solutions of lower regularity for $p>p_{\text {Crit }}(Q, \gamma, \nu) $ in some energy evolution space; and
		\item blow-up of weak solutions in finite time even for small data for $p<p_{\text {Crit }}(Q, \gamma, \nu)$.
	\end{itemize}
	
	However,  the question of either proving the global (in time) existence of small data Sobolev solutions or the blow-up of weak solutions in the critical case $p:= p_{\text {Crit }}(Q, \gamma, \nu)$ was still {\it open}.

	The following result is one of the main findings of this paper, resolving the open question of \cite{DKMR24} by proving the small data global existence for the critical case.
	\begin{theorem}\label{main-result1}  Let ${\mathbb{G}}$ be a graded Lie group of homogeneous dimension $Q$ and let $\mathcal{R}$ be a positive Rockland operator of homogeneous degree $\nu \geq 2.$ Assume that 
		$$ \begin{cases}
			\gamma \in\left(0, \frac{Q}{2}\right) 
			\quad  \hspace{3.4cm} \text{if}  \quad Q=1, 2;\\
			\gamma \in\left(0, \min\{\frac{Q}{2}, \tilde{\gamma} \} \right)
			\quad  \hspace{2cm} \text{if}  \quad Q=3, 4;\\
			\gamma \in \left( \frac{Q\nu}{2s}-\frac{Q}{2} -\nu, \min\{\frac{Q}{2}, \tilde{\gamma} \}\right)
			\quad  \hspace{0.2cm} \text{if}  \quad Q=5, 6,
		\end{cases}$$ where $\tilde{\gamma}$ denotes the positive root of  the quadratic equation $2 \tilde{\gamma}^2+Q \tilde{\gamma}-\nu Q=0$, i.e., $\tilde{\gamma}= \frac{-Q+\sqrt {Q^2+8\nu Q}}{4}$ for $Q\geq 3$.   Also,  let the exponent $p$ satisfy
		\begin{align}
			p=p_{\text {Crit }}(Q, \gamma, \nu):=1+\frac{2\nu}{Q+2\gamma}.  		\end{align}
		Then, there exists a small positive constant $\varepsilon_0$ such that for any $\left(u_0, u_1\right) \in \mathcal{A}^{s }:=\ (H^s \cap \dot{H}^{-\gamma}\ ) \times\ (L^2 \cap \dot{H}^{-\gamma}\ )$ satisfying $\left\|\left(u_0, u_1\right)\right\|_{\mathcal{A}^{s}}=\varepsilon \in\left(0, \varepsilon_0\right]$, the Cauchy problem for the semilinear damped wave equation (\ref{eq0010}) has a uniquely determined Sobolev solution
		$$
		u \in \mathcal{C}\left([0, \infty), H^s\right).
		$$

	\end{theorem}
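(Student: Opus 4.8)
The plan is to realize the solution as a fixed point of the integral operator coming from Duhamel's formula and to run a contraction argument in a carefully time-weighted space. Writing $E_0(t),E_1(t)$ for the propagators of the linear equation $u_{tt}+\Rc u+u_t=0$ acting on the data $u_0,u_1$ respectively, the Cauchy problem \eqref{eq0010} is equivalent to the fixed-point equation
$$u(t)=\varepsilon E_0(t)u_0+\varepsilon E_1(t)u_1+\int_0^t E_1(t-\tau)\,|u(\tau)|^p\,d\tau=:Nu(t).$$
First I would fix the data space $\mathcal{A}^s=(H^s\cap\dot H^{-\gamma})\times(L^2\cap\dot H^{-\gamma})$ and introduce the solution space $X(T)=\mathcal{C}([0,T],H^s)$ equipped with the weight-adapted norm
$$\|u\|_{X(T)}=\sup_{0\le t\le T}\Big(M_0(t)^{-1}\|u(t)\|_{L^2}+M_s(t)^{-1}\|u(t)\|_{\dot H^s}\Big),$$
where $M_0(t)=(1+t)^{-\frac{Q+2\gamma}{2\nu}}$ and $M_s(t)=(1+t)^{-\frac{Q+2\gamma}{2\nu}-\frac{s}{\nu}}$ are dictated by the $L^2$-decay estimates for the linear problem; in the critical regime these weights have to be corrected by a factor $\log(e+t)$ on the $L^2$ component, and pinning down the exact placement of this logarithm is the delicate point of the whole argument.

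The linear input is the family of $\dot H^\sigma$-decay bounds for $E_0(t)u_0$ and $E_1(t)u_1$, obtained from the Plancherel formula on $\G$ together with the spectral decomposition of the positive Rockland operator $\Rc$: after the group Fourier transform the equation reduces to a scalar second-order ODE whose characteristic roots split the frequencies into a low-frequency (parabolic, heat-like) band and a high-frequency (oscillatory, exponentially damped) band, whose interplay produces the diffusion phenomenon and the enhanced decay coming from the $\dot H^{-\gamma}$ membership of the data. Granting these estimates, the two linear terms of $Nu$ are controlled by $C\varepsilon\|(u_0,u_1)\|_{\mathcal{A}^s}$ in the $X(T)$-norm.

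The nonlinear heart of the matter is to estimate the source $|u(\tau)|^p$ simultaneously in $L^2$ and in $\dot H^{-\gamma}$, since the Duhamel propagator $E_1(t-\tau)$ consumes data from $L^2\cap\dot H^{-\gamma}$. For the $L^2$ piece I would invoke the fractional Gagliardo--Nirenberg inequality on graded Lie groups, $\||u|^p\|_{L^2}=\|u\|_{L^{2p}}^p\lesssim\|u\|_{L^2}^{p(1-\theta)}\|u\|_{\dot H^s}^{p\theta}$ with $\theta=\frac{Q(p-1)}{2sp}$, while for the $\dot H^{-\gamma}$ piece the Hardy--Littlewood--Sobolev embedding $L^m\hookrightarrow\dot H^{-\gamma}$ with $\gamma=Q(\tfrac1m-\tfrac12)$ reduces matters to an $L^{mp}$-norm, treated once more by Gagliardo--Nirenberg. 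Feeding the $X(T)$-weights into these bounds yields a decay rate $(1+\tau)^{-\beta}$ for the source, and it is precisely the identity $p=p_{\mathrm{Crit}}(Q,\gamma,\nu)$ that pushes the exponents in the ensuing time integral $\int_0^t M(t-\tau)(1+\tau)^{-\beta}\,d\tau$ into the borderline, logarithmically divergent case. The ranges imposed on $\gamma$ for $Q=1,2$, $Q=3,4$ and $Q=5,6$ are exactly what keep the interpolation exponent $\theta$ admissible in $[0,1]$ and the embedding order within the window $\gamma<\tilde\gamma$ in which the source still decays fast enough; this is the source of the quadratic $2\tilde\gamma^2+Q\tilde\gamma-\nu Q=0$.

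Finally I would close the contraction. Combining the two previous steps gives $\|Nu\|_{X(T)}\le C\varepsilon+C\|u\|_{X(T)}^p$, together with the difference bound $\|Nu-Nv\|_{X(T)}\lesssim\|u-v\|_{X(T)}\big(\|u\|_{X(T)}^{p-1}+\|v\|_{X(T)}^{p-1}\big)$, obtained from $\big||u|^p-|v|^p\big|\lesssim|u-v|\,(|u|^{p-1}+|v|^{p-1})$ with H\"older and Gagliardo--Nirenberg. Choosing $\varepsilon_0$ small enough, $N$ maps a small closed ball of $X(T)$ into itself and is a contraction there uniformly in $T$, so that passing to the limit $T\to\infty$ produces a unique global fixed point $u\in\mathcal{C}([0,\infty),H^s)$. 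The main obstacle, as stressed above, is the critical logarithmic divergence of the Duhamel integral: one must calibrate the log-weight in the norm so that the $p$-th power estimate reproduces exactly one power of $\log(e+t)$, with the strict inequality $p>1$ being what allows this logarithm to be gained back and the self-map to stay bounded and contractive rather than merely borderline.
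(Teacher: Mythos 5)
Your overall architecture (Duhamel formula, time-weighted space $\mathcal{C}([0,T],H^s)$, Gagliardo--Nirenberg for the source, Banach fixed point uniformly in $T$) is the same as the paper's, and your identification of where the constraints on $\gamma$ enter is broadly right. However, there are two genuine problems. First, your weight $M_0(t)=(1+t)^{-\frac{Q+2\gamma}{2\nu}}$ is not attainable from the linear theory for data in $L^2\cap\dot H^{-\gamma}$: the decay rate delivered by the $\dot H^{-\gamma}$ membership is $(1+t)^{-\gamma/\nu}$ for the $L^2$ norm and $(1+t)^{-\frac{s+\gamma}{\nu}}$ for the $\dot H^s$ norm (your rate would require an embedding $L^m\hookrightarrow\dot H^{-\gamma}$ with $\frac1m=1+\frac{\gamma}{Q}>1$), so the very first step, bounding $u^{\mathrm{lin}}$ in your norm, already fails.

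Second, and more seriously, the step you yourself flag as ``the delicate point'' --- calibrating a $\log(e+t)$ correction so that the $p$-th power ``gains back'' exactly one logarithm --- is left entirely undone, and it is in fact unnecessary: the critical case produces \emph{no} logarithm if the Duhamel integral is handled correctly. The paper splits $\int_0^t$ at $t/2$ and uses two different linear estimates for $E_1(t-\kappa)$. On $[t/2,t]$ one uses only the $L^2\to\dot H^{js}$ bound; Gagliardo--Nirenberg together with $p=p_{\mathrm{Crit}}$ gives $\big\||u|^p-|v|^p\big\|_{L^2}\lesssim(1+\kappa)^{-1-\gamma/\nu}(\cdots)$, whose exponent is strictly greater than $1$, so this piece integrates to a pure power. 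On $[0,t/2]$ one uses the $(L^2\cap L^m)\to\dot H^{js}$ smoothing estimate with $m=2/p$ (or $m=1$ when $p\ge2$), pulls the kernel factor $(1+t-\kappa)^{-\frac{Q}{\nu}(\frac1m-\frac12)-\frac{js}{\nu}}\simeq(1+t)^{-\frac{Q(p-1)}{2\nu}-\frac{js}{\nu}}$ out of the integral, and is left with $\int_0^{t/2}(1+\kappa)^{-\gamma p/\nu}\,d\kappa\lesssim(1+t)^{1-\gamma p/\nu}$, which converges like a power precisely because $\gamma<\tilde\gamma$ is equivalent to $\gamma p/\nu<1$; the criticality relation $(p-1)(Q+2\gamma)=2\nu$ then makes the two exponents combine to exactly $-(js+\gamma)/\nu$. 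Thus the borderline case of the convolution lemma is never hit, the norm needs no logarithmic correction, and the contraction closes with the clean weights $(1+t)^{\gamma/\nu}$ and $(1+t)^{(s+\gamma)/\nu}$. Without this splitting and the $L^m$-smoothing input (which is exactly the content of the paper's Proposition 4.1), your single-estimate convolution does land on the logarithmic case, and your proposal neither carries out the log-calibration nor supplies the mechanism that makes it avoidable --- so the heart of the critical-case theorem is missing.
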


	\begin{table}[t] \label{tabe}
		\setlength{\tabcolsep}{12pt}
		\renewcommand{\arraystretch}{2}
		\begin{tabular}{|c|c|c|c|c|}
			\hline
			\cellcolor{gray!25} $Q$ 
			& \cellcolor{gray!25} $\nu$ 
			& \cellcolor{gray!50} Global Existence 
			&\cellcolor{gray!65}  Blow-up\\
			\hline
			1, 2   &  $\geq 2$ &     $ 
			1+\frac{2\nu}{Q+2 \gamma} \leq p    
			\leq   \frac{Q}{(Q-2s)_+} 
			$    &  $1<p < 1+\frac{2\nu}{Q+2 \gamma}$ \\ 
			\hline
			3 &  2 &  $ 
			\begin{array}{l}
				1+\frac{4}{3+ 2\gamma} < p \leq  \frac{Q}{Q- 2s} ~\quad \text { if } ~0<\gamma \leq \tilde{\gamma}  \\p=1+\frac{4}{3+ 2\gamma}  ~\quad \text { if } ~0<\gamma < \min\{\frac{Q}{2}, \tilde{\gamma} \}  \\
				1+\frac{ 2\gamma}{Q}  \leq   p \leq \frac{Q}{Q- 2s} ~\quad\quad \text { if }  ~\tilde{\gamma} <\gamma<\frac{Q}{2} 
			\end{array}
			$  &  $  1<p<1+\frac{4}{3+2 \gamma}$   \\ 
			\hline
			3    &  4&  $  \begin{array}{l}
				1+\frac{8}{3+2 \gamma} < p    
				\leq   \frac{Q}{(Q-2s)} \\ p=1+\frac{8}{3+2 \gamma} ~\quad \text { if } ~0<\gamma < \min\{\frac{Q}{2}, \tilde{\gamma} \}  \end{array}
			$    &    $1<p < 1+\frac{8}{3+2 \gamma}$   \\
			\hline
			4, 5, 6&   2  &  $ 
			\begin{array}{l} \vspace{0.3cm}
				1+\frac{2\nu}{Q+ 2\gamma} < p \leq  \frac{Q}{Q- 2s} ~\quad \text { if } ~0<\gamma \leq \tilde{\gamma}  \\ p=1+\frac{2\nu}{Q+ 2\gamma}  ~\quad \text { if } ~ \frac{Q\nu}{2s}-\frac{Q}{2} -\nu<\gamma < \min\{\frac{Q}{2}, \tilde{\gamma} \}  \\
				1+\frac{ 2\gamma}{Q} \leq  p \leq \frac{Q}{Q- 2s} ~\quad\quad \text { if }  ~\tilde{\gamma} <\gamma<\frac{Q}{2} 
			\end{array}
			$  &  $1<p<1+\frac{2\nu}{Q+2 \gamma}$ \\
			\hline
		\end{tabular}
		\vspace{10pt}
		\caption{Ranges of  $p$ for global-in-time
			existence and blow-up of weak solutions for a pair $(Q, \nu)$ with the Rockland operator defined in \eqref{rockgragd}}
		\vspace{-15pt}
		\label{Table3}
	\end{table}

	The critical exponent $p_{\text{Crit}}(Q, \gamma, \nu)$  is new even in the setting of higher-order homogeneous differential operators (such as powers of negative Laplacian) (\ref{R^n})
	on $\mathbb{R}^n$, and, more generally, for a negative sublaplacian and its powers
	on a stratified Lie group  $\mathbb{G}$. 
	As an application of the main results of \cite{DKMR24} and Theorem \ref{main-result1} for the special choice of the Rockland operator of homogeneous degree $\nu=2 v_0$ \begin{equation} 
		\mathcal{R}:= \sum_{j=1}^n(-1)^{\frac{v_0}{v_j}} a_j X_j^{2 \frac{v_0}{v_j}}, \quad \text{with}\,\, a_1, a_2, \ldots, a_n>0,
	\end{equation}
	as defined in \eqref{rockgragd}, we have recorded in Table \ref{tabe} the precise descriptions of the qualitative behavior of the solutions to higher order hypoelliptic damped wave equations on graded Lie groups.
	
	To prove this theorem, we first established the following linear estimates for $L^m, m \in (1, 2]$ on graded Lie groups, which extend the linear estimates of the damped wave equation for $L^1$-data in the setting of the Heisenberg group \cite{Palmieri2020}. This result is of independent interest and will be extremely useful to study the nonlinear damped wave equations with $L^m$-data.
	
	\begin{prop}  Let ${\mathbb{G}}$ be a graded Lie group of homogeneous dimension $Q$ and let $\mathcal{R}$ be a positive Rockland operator of homogeneous degree $\nu \geq 2.$
		Let $s \in (0, 1]$ and  $(u_0, u_1) \in \left( H^s \cap L^m   \right) \times \left(L^2   \cap L^m  \right) $ for some $m \in(1,2]$.
		Then the  solution  of  the   linear Cauchy problem   	\begin{align}\label{Linear-system}
			\begin{cases}
				u_{tt}+\mathcal{R}u +u_{t} =0, & x\in  \mathbb{G},~t>0,\\
				u(0,x)= u_0(x),  & x\in  \mathbb{G},\\ u_t(0, x)= u_1(x), & x\in  \mathbb{G},
			\end{cases}
		\end{align}
		satisfies the following decay estimate	
		$$
		\begin{aligned}
			&\|u(t, \cdot)\|_{L^2  } \leq C(1+t)^{-\frac{Q}{\nu}\left(\frac{1}{m}-\frac{1}{2}\right)}  \left(\left\|u_0\right\|_{L^2   \cap L^m  }+\left\|u_1\right\|_{H ^{-1}   \cap L^m  }\right), \\
			& \left\|\mathcal{R}^{\frac{s}{\nu}} u(t, \cdot)\right\|_{L^2  }  \leq C1+t)^{-\frac{Q}{\nu}\left(\frac{1}{m}-\frac{1}{2}\right)-\frac{s}{\nu}}\left(\left\|u_0\right\|_{H^s \cap L^m}+\left\|u_1\right\|_{H^{s-1}    \cap L^m  }\right) ,
		\end{aligned}
		$$
		for $m \in (1, 2].$
	\end{prop}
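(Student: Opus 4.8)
The plan is to diagonalise the problem through the functional calculus of the positive self-adjoint operator $\mathcal{R}$ on $L^2(\mathbb{G})$ (equivalently, via the group Fourier transform and the Plancherel theorem) and to transfer \emph{all} the time decay from the heat semigroup $e^{-t\mathcal{R}}$. Writing the spectral resolution $\mathcal{R}=\int_0^\infty \lambda\,dE(\lambda)$, the Cauchy problem \eqref{Linear-system} reduces on each spectral fibre to the scalar damped oscillator $v''+v'+\lambda v=0$, whose solution is $v(t)=K_0(t,\lambda)v(0)+K_1(t,\lambda)v'(0)$ with
\begin{align*}
K_1(t,\lambda)=\frac{e^{r_+t}-e^{r_-t}}{r_+-r_-},\qquad K_0(t,\lambda)=\frac{r_+e^{r_-t}-r_-e^{r_+t}}{r_+-r_-},\qquad r_\pm=\frac{-1\pm\sqrt{1-4\lambda}}{2}.
\end{align*}
Thus $u(t)=K_0(t,\mathcal{R})u_0+K_1(t,\mathcal{R})u_1$ in the sense of the functional calculus, and since $g(\mathcal{R})$ acts on $L^2(\mathbb{G})$ with operator norm $\sup_{\lambda\in\sigma(\mathcal{R})}|g(\lambda)|$, the whole argument reduces to pointwise bounds on $K_0,K_1$ and their products with powers of $\lambda$, combined with the single analytic input
\begin{align*}
\|e^{-t\mathcal{R}}f\|_{L^2}\leq C\,t^{-\frac{Q}{\nu}\left(\frac{1}{m}-\frac{1}{2}\right)}\|f\|_{L^m},\qquad t>0,
\end{align*}
which follows from Young's inequality on $\mathbb{G}$ together with the scaling $\|h_t\|_{L^r}=t^{-\frac{Q}{\nu}\left(1-\frac{1}{r}\right)}\|h_1\|_{L^r}$ of the (Schwartz) heat kernel $h_t$ of $\mathcal{R}$.

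Next I would fix a small $\varepsilon\in(0,1/4)$ and a spectral cut-off $\chi$ supported in $[0,\varepsilon]$ with $\chi\equiv1$ near $0$, and split each solution operator into $K_j(t,\mathcal{R})\chi(\mathcal{R})$ and $K_j(t,\mathcal{R})(1-\chi)(\mathcal{R})$. For the low-frequency piece one uses that, as $\lambda\to0^+$, the roots behave like $r_+\sim-\lambda$, $r_-\sim-1$, so the diffusive mode $e^{-\lambda t}$ dominates; concretely I would verify that the corrected symbols $e^{\frac{t\lambda}{2}}K_j(t,\lambda)\chi(\lambda)$ are bounded uniformly in $t\geq0$, $\lambda\geq0$. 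This gives the factorisation $K_j(t,\mathcal{R})\chi(\mathcal{R})=m_j(t,\mathcal{R})\,e^{-\frac{t}{2}\mathcal{R}}$ with $\|m_j(t,\mathcal{R})\|_{L^2\to L^2}\leq C$, whence the low-frequency contribution is controlled by $\|e^{-\frac{t}{2}\mathcal{R}}u_j\|_{L^2}\leq C\,t^{-\frac{Q}{\nu}(\frac1m-\frac12)}\|u_j\|_{L^m}$. For the estimate carrying $\mathcal{R}^{s/\nu}$, I would instead factor $\lambda^{s/\nu}K_j(t,\lambda)\chi(\lambda)$ as a uniformly bounded symbol times $\lambda^{s/\nu}e^{-\frac{t\lambda}{4}}$ times $e^{-\frac{t\lambda}{2}}$, and use $\sup_\lambda\lambda^{s/\nu}e^{-\frac{t\lambda}{4}}\leq C\,t^{-s/\nu}$ to gain the additional factor $t^{-s/\nu}$.

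For the high-frequency piece the roots have real part bounded above by $-\delta$ for some $\delta>0$ (equal to $-1/2$ once $\lambda>1/4$, the apparent singularity at $\lambda=1/4$ being removable), so each $K_j(t,\lambda)$ decays like $e^{-\delta t}$ on $\{\lambda\geq\varepsilon\}$. To recover the negative-order norms I would insert the weights $(1+\lambda)^{\pm1/\nu}$: for the $u_1$-terms one writes $K_1(t,\lambda)(1-\chi)(\lambda)=\big[K_1(t,\lambda)(1+\lambda)^{1/\nu}(1-\chi)(\lambda)\big](1+\lambda)^{-1/\nu}$, and on $\{\lambda\geq\varepsilon\}$ the bracketed symbol behaves like $\lambda^{1/\nu-1/2}e^{-t/2}$, which is bounded precisely because $\nu\geq2$; this yields control by $e^{-\delta t}\|u_1\|_{H^{-1}}$ (respectively $\|u_1\|_{H^{s-1}}$ in the weighted estimate), while the $u_0$-terms are controlled by $\|u_0\|_{L^2}$ (respectively $\|u_0\|_{H^s}$). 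Finally, since the heat bound $t^{-\alpha}$ is singular as $t\to0$ whereas the claimed rate $(1+t)^{-\alpha}$ is not, I would treat $t\in[0,1]$ separately using the trivial uniform $L^2$/Sobolev boundedness of all the symbols on compact time intervals, and patch this with the large-$t$ decay to obtain the stated $(1+t)$-rates.

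The main obstacle I anticipate is the low-frequency analysis: one must justify rigorously, at the level of operators through the functional calculus (or the Plancherel theorem for $\mathbb{G}$), both the factorisation through $e^{-\frac{t}{2}\mathcal{R}}$ and the uniform boundedness of the corrected symbols, since on a graded group there is no Hausdorff--Young inequality to play the role it does in the Euclidean multiplier computations. Everything then hinges on the heat-kernel $L^m\to L^2$ estimate, whose proof (Young's inequality plus the scaling of the heat kernel) I would isolate as a preliminary lemma.
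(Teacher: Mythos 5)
Your proposal is correct, but it takes a genuinely different route from the paper. The paper's proof is a two-line reduction: since $\frac{\gamma}{Q}=\frac{1}{m}-\frac{1}{2}$ with $\gamma\in(0,\frac{Q}{2})$, the Sobolev inequality \eqref{eq177} gives the embedding $\|u\|_{\dot H^{-\gamma}}\simeq\|\mathcal{R}^{-\gamma/\nu}u\|_{L^2}\lesssim\|u\|_{L^m}$, and then the stated decay rates follow immediately by inserting this into the $\dot H^{-\gamma}$-based estimates of \cite[Theorem 1.1]{DKMR24} (the case $m=2$ being that theorem with $\gamma=0$). You instead reprove the decay from scratch: spectral resolution of $\mathcal{R}$, explicit symbols $K_0,K_1$ with roots $r_\pm$, a low/high frequency cut-off, factorisation of the low-frequency piece through $e^{-\frac{t}{2}\mathcal{R}}$, and the $L^m\to L^2$ smoothing of the heat semigroup via Young's inequality and the dilation scaling of the Schwartz heat kernel $h_t$. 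This is essentially a self-contained re-derivation of the machinery behind \cite[Theorem 1.1]{DKMR24} (your $r_\pm$ asymptotics match the paper's kernels \eqref{shyam}--\eqref{shyam2}), and all the key steps check out: $r_+\leq-\lambda$ on $[0,\frac14]$ makes $e^{\frac{3t\lambda}{4}}K_j(t,\lambda)\chi(\lambda)$ uniformly bounded, $\sup_\lambda\lambda^{s/\nu}e^{-t\lambda/4}\lesssim t^{-s/\nu}$ supplies the extra gain, the high-frequency weight $\lambda^{1/\nu-1/2}$ is bounded precisely because $\nu\geq2$, and the obstacle you flag (operator-level factorisation) is harmless because the spectral theorem for the self-adjoint operator $\mathcal{R}$ turns every pointwise symbol bound into an $L^2\to L^2$ operator bound, with Hausdorff--Young never needed. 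What each approach buys: the paper's argument is shorter but inherits the restriction $\gamma<\frac{Q}{2}$, i.e.\ $m>1$, from the Sobolev embedding; your direct heat-kernel argument works verbatim at the endpoint $m=1$, which is in fact the case the paper later needs in \eqref{1111} for $p\geq2$, so your version is arguably the more robust one.
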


	Now  consider the following Cauchy problem for the heat equation  
	\begin{align} \label{heat}
		\begin{cases}
			&w_t+\mathcal{R}w=0, \quad  g\in \G, t>0, \\ &w(0, x)=u_0(x)+u_1(x), \quad g\in \G,
		\end{cases}
	\end{align}
	where the initial data $u_0, u_1$ are the same as in (\ref{eq0010}). One of the natural and interesting questions is whether it is possible to provide an asymptotic profile of the solution to (\ref{eq0010}) given by a solution of (\ref{heat}) as time tends to infinity. Interestingly, we show how the diffusion phenomenon bridges decay properties of solutions to the Cauchy problem for the damped wave equation (\ref{eq0010}) and solutions to the   Cauchy problem for the heat equation  (\ref{heat}). By diffusion phenomenon, we mean that when measuring the difference between Sobolev solutions of the damped wave equation and the heat equation in appropriate norms, an additional time decay rate emerges. Such diffusion phenomenon with the initial data from $L^m$-space or the negative order Sobolev spaces in the Euclidean setting have been observed in the literature previously, we refer \cite{Reissig, HL92, Nis03,Kar00} and references therein.  
	
	The following theorem is the second main
	result of this paper, addressing that the diffusion phenomenon is also valid in the framework of the negative order Sobolev space $\dot{H}^{-\gamma}$ on a graded Lie group $\G$. This result is also new for the Heisenberg group.

	\begin{theorem} Let ${\mathbb{G}}$ be a graded Lie group of homogeneous dimension $Q$ and let $\mathcal{R}$ be a positive Rockland operator of homogeneous degree $\nu \geq 2.$
		Let	$\left(u_0, u_1\right) \in\left(
		{	H} ^{s}  \cap \dot 
		{	H} ^{-\gamma} \right) \times\left(
		{	H} ^{s}   \cap  \dot
		{	H} ^{-\gamma}  \right)$  with  $s \geq 0$ and $\gamma \in \mathbb{R}$ such that   $s+\gamma+\nu \geqslant 0$. Let $u$ and $w$ be the solutions to the linear Cauchy problems (\ref{Linaer equation}) and  (\ref{heat}), respectively. Then, $u-w$ satisfies
		\begin{align}
			\|u(t, \cdot)-w(t, \cdot)\|_{
				\dot	{	H} ^{s} } \lesssim(1+t)^{-\frac{s+\gamma}{\nu}-1}\left(\left\|u_0\right\|_{
				{	H}^{s}  \cap \dot
				{	H} ^{-\gamma} }+\left\|u_1\right\|_{
				{	H}^{s-1}  \cap \dot
				{	H} ^{-\gamma}  }\right) .
		\end{align}
	\end{theorem}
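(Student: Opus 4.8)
The plan is to pass to the Fourier/spectral side and reduce the whole problem to scalar multiplier estimates governed by the characteristic roots of the underlying ordinary differential equation. Since $\mathcal{R}$ is a positive self-adjoint operator, applying the group Fourier transform together with the Plancherel theorem on $\mathbb{G}$ diagonalizes both Cauchy problems through the spectral calculus of $\pi(\mathcal{R})$, $\pi\in\widehat{\mathbb{G}}$; writing $\lambda\ge 0$ for a point in the spectrum of $\pi(\mathcal{R})$, the linear damped wave problem \eqref{Linear-system} is governed by the characteristic equation $\mu^2+\mu+\lambda=0$ with roots $\mu_\pm(\lambda)=\tfrac{1}{2}(-1\pm\sqrt{1-4\lambda})$, while the heat problem \eqref{heat} contributes the multiplier $e^{-\lambda t}$. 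First I would record the solution formulas $\widehat{u}=c_+ e^{\mu_+ t}+c_- e^{\mu_- t}$, with $c_\pm$ the usual data-dependent coefficients, and $\widehat{w}=e^{-\lambda t}(\widehat{u}_0+\widehat{u}_1)$, so that $\|u-w\|_{\dot H^s}^2$ becomes an integral of $\lambda^{2s/\nu}\,|\widehat u-\widehat w|^2$ against the Plancherel measure.

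The heart of the matter is a low-frequency analysis. Fixing a threshold $\delta\in(0,\tfrac14)$ and splitting into $\{\lambda\le\delta\}$ and $\{\lambda>\delta\}$, the key is the Taylor expansion $\mu_+(\lambda)=-\lambda-\lambda^2+O(\lambda^3)$ as $\lambda\to 0^+$, which exhibits $e^{\mu_+ t}$ as a perturbation of the heat multiplier $e^{-\lambda t}$. I would decompose
\[
\widehat u-\widehat w=(\widehat u_0+\widehat u_1)\bigl(e^{\mu_+ t}-e^{-\lambda t}\bigr)+r_+(\lambda)\,e^{\mu_+ t}+c_-\,e^{\mu_- t},
\]
where $r_+=c_+-(\widehat u_0+\widehat u_1)$. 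From the expansion of $\mu_+$ one obtains the two pointwise bounds $|e^{\mu_+ t}-e^{-\lambda t}|\lesssim \lambda^2 t\,e^{-c\lambda t}$ and $|r_+|\lesssim \lambda(|\widehat u_0|+|\widehat u_1|)$, while $\mu_-(\lambda)\le-\tfrac12$ forces $|c_-\,e^{\mu_- t}|\lesssim e^{-t/2}(|\widehat u_0|+|\widehat u_1|)$. Inserting these into the weighted integral and factoring out the homogeneous norm $\dot H^{-\gamma}$ (that is, extracting the density $\lambda^{-2\gamma/\nu}|\widehat u_0+\widehat u_1|^2$), each of the first two terms is controlled by $\sup_{\lambda>0}\lambda^{\,\alpha}e^{-2c\lambda t}\sim t^{-\alpha}$: the diffusion term gives $t^2\cdot t^{-2(s+\gamma)/\nu-4}$ and the coefficient-correction term gives $t^{-2(s+\gamma)/\nu-2}$, which coincide and yield precisely the rate $(1+t)^{-(s+\gamma)/\nu-1}$ after taking square roots. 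The exponentially small $c_-$ term is subsumed using the $H^{s-1}$ part of the data, since at low frequencies $H^{s-1}$, $L^2$ and $\dot H^{s}$ are comparable.

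In the high-frequency region $\{\lambda>\delta\}$ the roots satisfy $\Re\mu_\pm=-\tfrac12$ and $e^{-\lambda t}\le e^{-\delta t}$, so $u$, $w$ and their difference all decay like $e^{-ct}$ and are negligible against any polynomial rate; here I would bound everything by the inhomogeneous norms $\|u_0\|_{H^s}$ and $\|u_1\|_{H^{s-1}}$, noting that the coefficient $1/(\mu_+-\mu_-)\sim\lambda^{-1/2}$ multiplying the $\widehat u_1$-datum is exactly absorbed by the shift from $H^s$ to $H^{s-1}$ precisely because $\nu\ge2$. Combining the two regions yields the asserted estimate.

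The main obstacle I anticipate is the low-frequency bookkeeping: obtaining the sharp pointwise bounds $\lambda^2 t$ and $\lambda$ on the two difference terms from the expansion of $\mu_+$, checking that they conspire to produce the same decay exponent, and, most importantly, verifying that the hypothesis $s+\gamma+\nu\ge0$ is exactly the condition making the supremum $\sup_{\lambda>0}\lambda^{2(s+\gamma)/\nu+2}e^{-2c\lambda t}$ finite, which is what permits the coefficient-correction term to be estimated in the homogeneous norm $\dot H^{-\gamma}$.
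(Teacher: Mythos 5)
Your proposal follows essentially the same route as the paper: pass to the Fourier side, split into low and high spectral regions, Taylor-expand the characteristic roots near $\lambda=0$ to compare the damped-wave multipliers with the heat multiplier $e^{-\lambda t}$, and extract the extra power of $\lambda$ (your $\lambda^2 t\,e^{-c\lambda t}$ and $\lambda$ bounds correspond to the paper's $\beta_{m,\pi}^2 e^{-c\beta_{m,\pi}^2 t}$ gain obtained via the Newton--Leibniz formula) which, against the $\dot H^{-\gamma}$ density, yields the additional factor $(1+t)^{-1}$. Your identification of $s+\gamma+\nu\ge 0$ as exactly the condition for the low-frequency supremum to be finite is correct and matches the role this hypothesis plays in the paper's argument.
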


	Apart from the introduction, the outline of the organization of the paper is as follows.
	\begin{itemize}
		\item  Section \ref{sec2} is devoted to recalling some basics of the Fourier analysis on graded Lie groups to make the paper self-contained. 
		\item  In Section \ref{sec3}, we discussed the philosophy of our approach to established the global existence result and defined the notion of suitable solution and related energy spaces. 
		\item  In Section \ref{sec4}, we prove the small data global existence results for the higher order hypoelliptic damped wave equation on graded Lie groups. 
		\item In Section \ref{sec5}, we explore the diffusion phenomenon of the higher order hypoelliptic damped wave equation on graded Lie groups with initial data belonging additionally
		to Sobolev spaces of negative order.  
	\end{itemize}

	\section{Preliminaries: Analysis on    graded Lie  groups} \label{sec2}
	For more details on the material of this section, we refer to \cite{Folland, Fischer, RF17}.
	\subsection{Graded Lie groups} A graded Lie group $\mathbb{G}$  is a connected and simply connected  Lie group whose Lie algebra $\mathfrak{g}$ is {\it graded}, that is, $\mathfrak{g}$ admits a vector space decomposition  $\mathfrak{g}= \bigoplus_{i=1}^\infty \mathfrak{g}_i,$ for which all but finitely many $\mathfrak{g}_i$'s are $\{0\}$ and satisfy the inclusions $[\mathfrak{g}_i, \mathfrak{g}_j] \subset \mathfrak{g}_{i+j}$ for all $i, j \in \mathbb{N}.$ Such a decomposition of a Lie algebra $\mathfrak{g}$ is called {\it gradation} of $\mathfrak{g}.$ A graded Lie algebra is {\it stratifiable} if there exists a gradation of $\mathfrak{g}$ such that $[\mathfrak{g}_1, \mathfrak{g}_i ]= \mathfrak{g}_{i+1}$ for all $i \in \mathbb{N}.$  A Lie group $\mathbb{G}$ whose Lie algebra $\g$ is stratifiable is called a {stratified Lie group}.  This immediately shows that every stratified Lie group is graded.  The Heisenberg group, more generally, $H$-type groups, Engel groups and Cartan groups are examples of stratified Lie groups.

	We define a family of dilations $D_r^{\mathfrak{g}},\,r>0,$ on a Lie algebra $\mathfrak{g}\cong \mathbb{R}^n $ as the vector space automorphisms of $\mathfrak{g}$ of the form $D_r^{\mathfrak{g}}:=\exp(\ln(r)A)$ for some diagonalisable matrix $A \sim \text{diag}[\nu_1, \nu_2, \ldots, \nu_n]$ with positive eigenvalues $0<\nu_1 \leq  \nu_2\leq \ldots \leq \nu_n$ on $\mathfrak{g}$ such that   $$D_r^{\mathfrak{g}}[X, Y]=[D_r^{\mathfrak{g}}X, D_r^{\mathfrak{g}}Y],$$
	for all $X, Y \in \mathfrak{g}$ and $r>0.$ The  positive eigenvalues $0<\nu_1 \leq  \nu_2\leq \ldots \leq \nu_n$ of $A$ are called the dilations' weights of $\g.$  A Lie algebra $\g \cong \mathbb{R}^n$ is called {\it homogeneous} if there exists a family of dilations $D_r^{\mathfrak{g}},\,r>0,$ on $\g.$ It is well known that the existence of a family of dilations on $\g$ implies that $\g$ is a nilpotent Lie algebra. The Lie group $\mathbb{G}:=\exp{\g},$ which is connected and simply connected, is called  {\it homogeneous} if $\g$ is a homogeneous Lie algebra.  The family of dilations $\{D_r^{\mathfrak{g}}: r>0\}$ on Lie algebra $\mathfrak{g}$ induces a family of dilations $\{D_r: r>0\}$ on the group $\mathbb{G}$ by $D_r:=\exp \circ D_r^{\mathfrak{g}} \circ \exp^{-1},\,\, r>0.$

	It is easy to note that for a given  graded Lie algebra $\mathfrak{g}= \bigoplus_{i=1}^\infty \mathfrak{g}_i$, the sequence of subspaces $\mathfrak{I}_k:=\bigoplus_{i=k}^\infty \mathfrak{g}_i$ forms a finite nested sequence of ideals in $\g.$ Thus, using these ideals $\mathfrak{I}_k,$ any basis $\{X_1, X_2, \ldots, X_n\}$ given as the union of the bases $\{X_1, X_2, \ldots, X_{n_i}\}$ is necessarily a strong Malcev basis of $\g.$ Such a basis of a graded Lie algebra $\g$ gives rise to a  family of dilations $D_r^{\g}, \,r>0,$ on $\g$ using the matrix given by  $A X_j=i X_j$ for every $X_j \in \g_i,$ that is, $D_r^{\g} X_j=r^{i}X_j.$ 
	
	We may identify $\mathbb{G}$ with $\mathbb{R}^{n}$ with $n=\text{dim}~\mathfrak{g}$ via the exponential map $\exp: \mathfrak{g} \rightarrow G$ given by $x=\exp(x_1X_1+x_2X_2+\cdots+x_nX_n) \in \mathbb{G}$ having a basis of $\g.$ Using this identification, we can naturally identify a function on $\mathbb{G}$ as a function on $\mathbb{R}^n.$ These exponential coordinates allow us to represent the action of $D_r$ on $\mathbb{G}$ explicitly:
	$$D_r(x)=rx:=\exp(r^{\nu_1}X_1+r^{\nu_2}X_2+\cdots+r^{\nu_n}X_n)=(r^{\nu_1}x_1, r^{\nu_2}x_2, \ldots, r^{\nu_n}x_n),$$
	for $x:=(x_1,x_2,\ldots,x_n) \in \mathbb{G},\,\,r>0.$ This notion of dilation on $\mathbb{G}$ is crucial to define the notion of homogeneity for functions, measures, and operators. For examples, the bi-invariant Haar measure $dx$ on $\mathbb{G},$ which is just a Lebesgue measure on $\mathbb{R}^n,$ is $Q$-homogeneous in the sense that 
	$$d(D_r(x))=r^Q dx,$$ where $Q:= \sum_{i=1}^\infty i \dim
	\g_i=\nu_1+\nu_2+\ldots+\nu_n$ is called the homogeneous dimension of $\mathbb{G}.$ It is customary to jointly rescale weights so that $\nu_1=1.$ This also shows that $Q \geq n.$
	Identifying the elements of $\g$ with the left-invariant vector fields, each $X_j$ is a homogeneous differential operator of degree $\nu_j.$ For every multi-index $\alpha \in \mathbb{N}^n_0$, we set $X^\alpha= X_1^{\alpha_1} X_2^{\alpha_2}\ldots X_r^{\alpha_n}$ in the universal enveloping algebra $\mathfrak{U}(\mathfrak{g})$ of the Lie algebra $\mathfrak{g}.$ Then $X^\alpha$ is of homogeneous degree $[\alpha]:=\alpha_1 \nu_1+\alpha_2 \nu_2+\ldots+\alpha_n \nu_n.$
	
	\subsection{Homogeneous quasi-norms on homogeneous  groups and polar decomposition} 
	A {\it homogeneous quasi-norm} on a homogeneous group $\G$ is a continuous function $|\cdot|:\G \rightarrow [0, \infty)$ such that it satisfies the following properties:
	\begin{itemize}
		\item $|x|=0$ if and only if $x=e_\G.$
		\item  $|x^{-1}|=|x|$
		\item $|D_rx|=r|x|$ for $r>0.$
	\end{itemize}
	There always exists a homogeneous quasi-norm in any homogeneous group $\mathbb{G}$ (\cite{Folland}). One can show the existence of a homogeneous quasi-norm on $\mathbb{G}$, which is $C^\infty$-smooth on $\mathbb{G} \backslash \{e_\mathbb{G}\}.$ Every homogeneous quasi-norm satisfies the following triangle inequality with the constant $C\geq 1:$
	$$|xy| \leq C (|x|+|y|)\quad \forall x, y \in \mathbb{G}.$$
	In fact, it is always possible to choose a  homogeneous quasi-norm on any homogeneous group that satisfies the triangle inequality with constant $C=1.$ Any two homogeneous quasi-norms on $\mathbb{G}$ are equivalent.
	
	Similar to the Euclidean space, there is a notion of polar decomposition on a homogeneous group $\mathbb{G}$ for the homogenous quasi-norm $|\cdot|.$  Let 
	$$\mathfrak{S}:=\{x \in G:\,\,\, |x|=1\}$$ be the unite sphere with respect to the homogenous quasi-norm $|\cdot|.$ Then there exists a unique Radon measure $\sigma$ on $\mathfrak{S}$ such that for all $f \in L^1(\G),$ we have 
	\begin{equation} \label{polardeco}
		\int_{G} f(x)dx= \int_0^\infty \int_{\mathfrak{S}} f(ry) r^{Q-1} d\sigma(y)\, dr.
	\end{equation}
	
	\subsection{Positive Rockland operators on graded Lie groups} \label{Rocksec}
	
	Now it is time to introduce the main object of the discussion, namely, Rockland operators. To define them, we first need to fix some notation for the continuous unitary representations of the group. Let $(\pi, \mathcal{H}_\pi)$ be a continuous unitary representation of a graded Lie group $\G.$ Denote the set of equivalence classes of all strongly continuous unitary representations of $\G$ by $\widehat{\G}.$ Here, the Hilbert space $\mathcal{H}_\pi$ denotes the representation space of $\pi.$ We also denote the space of all smooth vectors of $\pi$ by $\mathcal{H}_\pi^\infty,$ which is a subspace of $\mathcal{H}_\pi.$
	The infinitesimal representation of the Lie algebra $\mathfrak{g}$ and its extension to the universal enveloping Lie algebra $\mathfrak{U}(\mathfrak{g})$ will also be denoted by $\pi.$ We note here that the space of left-invariant vector fields and the algebra of left-invariant differential operators on $\G$ can be identified with $\mathfrak{g}$ and $\mathfrak{U}(\mathfrak{g}),$ respectively. For a left-invariant differential operator $T$, let us denote by $\pi(T),$ the infinitesimal representation $d\pi(T)$ associated with $\pi \in \widehat{\G}.$ 
	
	A left-invariant differential operator $\mathcal{R}$ on a homogeneous group $\G$ is called a {\it Rockland operator} if it is homogeneous of positive degree $\nu,$ that is, 
	$$\mathcal{R}(f \circ D_r)=r^{\nu} (\mathcal{R}f) \circ D_r, \quad r>0,\,\, f \in C^\infty(\G), $$ and 
	the operator $\pi(\mathcal{R})$ is injective on $\mathcal{H}^\infty_\pi$ for every nontrivial representation $\pi \in \widehat{\G},$ that is, 
	\begin{equation}\label{Rock}
		\forall v \in \mathcal{H}^\infty_\pi \quad \pi(\mathcal{R})v=0 \implies v=0.
	\end{equation}
	The condition \eqref{Rock} is known as the Rockland condition. The Rockland condition for $\mathcal{R}$ is equivalent to the {\it hypoellipticity} of $\mathcal{R},$ that is locally, $\mathcal{R}f \in C^\infty(\G) \implies f \in C^\infty(\G).$ This equivalence is commonly known as the {\it Rockland conjecture} (see \cite{Rock}) and was resolved in \cite{HN} (see also \cite{Miller}).
	A Rockland operator is {\it positive} when 
	$$\int_{\G} \mathcal{R}f(x) \overline{f(x)} dx \geq 0,\quad \forall f \in \mathcal{S}(\G).$$ 
	
	It is a celebrated result of Miller \cite{Miller}  (see also \cite{terRob97} and  \cite[Proposition 4.1.3]{Fischer}) which says that if there is a Rockland operator on a homogeneous Lie group $\G$, then the group $\G$ must be graded.
	On the other hand, an infinite family of positive Rockland operators can be created for any graded Lie group $\G.$ Indeed, the operators given by 
	\begin{equation}
		\mathcal{R}:= \sum_{j=1}^n(-1)^{\frac{\nu_0}{\nu_j}} a_j X_j^{2 \frac{\nu_0}{\nu_j}}, \quad \text{with}\,\, a_1, a_2, \ldots, a_n>0
	\end{equation}
	for any strong Malcev basis $\{X_1, X_2, \ldots, X_n\}$ of the Lie algebra $\g$ and any common multiple $\nu_0$ of $\nu_1, \nu_2, \ldots, \nu_n,$ are positive Rockland operators of homogeneous degree $\nu=2\nu_0.$ It is easy to see that if $\mathcal{R}$ is a positive Rockland operator, then its powers $\mathcal{R}^k,\, k \in \mathbb{N},$ and complex conjugate $\overline{\mathcal{R}}$ are also Rockland operators.  
	
	{\it Throughout this paper, we will always assume that a Rockland operator is always positive and essentially self-adjoint on $L^2(\G).$}

	In the stratified case, assume that $\{X_1, X_2, \ldots, X_{n_1}\}$ is a basis of the first stratum $\g_1$ of the stratified Lie algebra. Then any left-invariant {\it sub-Laplacian} (with geometers sign convention) on $\G$
	\begin{align}\label{stratified}
		\mathcal{L}_\G:=-(X_1^2+X_2^2+\cdots+X_{n_1}^2)
	\end{align}is a positive Rockland operator of the homogeneous degree $\nu=2.$ 
	On $\g =\mathbb{R}^n,$ with the trivial stratification and canonical family of dilation $D_r(x)=rx, \,\,r>0,$ on the group $(\mathbb{R}^n, +)$, the Laplace operator  $-\Delta_x:=-\sum_{i=1}^n \partial_{x_i}^2$ is a particular case of a positive sub-Laplacian. By equipping the group $\G=(\mathbb{R}^n, +)$ with another  isotropic or anisotropic family of dilations with the dilations' weights $\mathbb{R}^n \ni (\nu_1, \nu_2,\ldots, \nu_n) \neq (1, 1, \ldots, 1)$  determined by the canonical basis of $\g =\mathbb{R}^n,$ the operator 
	\begin{equation}
		\mathcal{R}:= \sum_{j=1}^n(-1)^{\frac{\nu_0}{\nu_j}} a_j \partial_j^{2 \frac{\nu_0}{\nu_j}}, \quad \text{with}\,\, a_1, a_2, \ldots, a_n>0
	\end{equation}
	is a positive Rockland operator of homogeneous degree $\nu:=2\nu_0$ on $\G=(\mathbb{R}^n, +)$ provided  $\nu_0$ is any common multiple of $\nu_1, \nu_2, \ldots, \nu_n.$

	\subsection{Fourier transform on graded Lie groups}
	
	One of the important tools to deal with PDEs on graded Lie groups is the operator-valued group Fourier transform on $\G$. The group Fourier transform $\mathcal{F}_{\G   }(f)(\pi):\mathcal{H}_\pi \rightarrow \mathcal{H}_\pi$ of $f\in \mathcal{S}(\G)\cong \mathcal{S}(\mathbb{R}^n), $  at $\pi\in\widehat{\G},$ is a linear mapping that can be represented by an infinite matrix once we choose a basis for
	the Hilbert space $\mathcal{H}_\pi,$  and defined by 
	\begin{equation} \label{gft}
		\mathcal{F}_{\G   }(f)(\pi)=\widehat{f}(\pi):=\int_{\G}f(x)\pi(x)^*dx = \int_{\G} f(x)\pi(x^{-1})\,dx.
	\end{equation}
	For $f \in L^2(\G),$ the operator $\widehat{f}(\pi)$ is a Hilbert-Schmidt operator on $\mathcal{H}_{\pi}$ for each $\pi \in \widehat{G}.$ Moreover, there exists a measure $\mu$ on $\widehat{G}$ such that the following inversion formula 
	$$f(x)= \int_{\widehat{\G}} \text{Tr}(\pi(x)\widehat{f}(\pi)) d\mu(\pi)$$
	holds for every $f \in \mathcal{S}(\G)$ and $x \in \G.$
	
	Additionally, the following {\it Plancherel identity} is also true for $f \in \mathcal{S}(\G):$
	\begin{equation}
		\int_{\G} |f(x)|^2 \,dx =\int_{\widehat{G}} \|\widehat{f}(\pi)\|_{\text{HS}(\mathcal{H}_\pi)}^2\, d\mu(\pi).
	\end{equation}
	Furthermore, the Fourier transform $\mathcal{F}_{\G   }$ extends uniquely to a unitary isomorphism from $L^2(\G)$ onto the space $L^2(\widehat{\G}),$ where the space $L^2(\widehat{\G})$ is defined as the direct integral of Hilbert spaces of measurable fields of operators 
	$$L^2(\widehat{\G}):=\int_{\widehat{G}}^{\oplus} \text{HS}(\mathcal{H}_\pi) d\mu(x)$$
	with the norm 
	$$\|\tau\|_{L^2(\widehat{\G})}= \left( \int_{\widehat{\G}} \|\tau_\pi\|_{\text{HS}(\mathcal{H}_\pi)}^2\, d\mu(\pi) \right)^{\frac{1}{2}}.$$
	The measure $\mu$  is called the {Plancherel measure} on $\widehat{\G}$.
	
	Moreover, for any $f \in L^2(\mathbb{G})$,  we have 
	$$
	\mathcal{F}_{\mathbb G}(\mathcal{R} f)(\pi)=\pi(\mathcal{R})\widehat{f}(\pi).
	$$
	The authors in \cite{spectrum}  proved that the spectrum of the operator $\pi(\mathcal{R})$ with $\pi\in \widehat{ \mathbb{G}}\backslash \{1\}$, is discrete and lies in $(0, \infty)$.  Thus  we can choose an orthonormal basis for $\mathcal{H}_\pi$  such that the infinite matrix associated
	to the self-adjoint operator $\pi(\mathcal{R})$ has the  following  representation 
	\begin{align}\label{matrix}
		\pi(\mathcal{R})=\left(\begin{array}{cccc}
			\pi_1^2 & 0 & \cdots & \cdots \\
			0 & \pi_2^2 & 0 & \cdots \\
			\vdots & 0 & \ddots & \\
			\vdots & \vdots & & \ddots
		\end{array}\right)
	\end{align}
	where $\pi_i, \, i=1,2,\ldots,$ are strictly positive real numbers and  $\pi\in \widehat {\mathbb{G}}\backslash \{1\}$. 
	
	\subsection{Sobolev spaces on graded Lie groups and interpolation inequalities}
	The Sobolev spaces on graded Lie groups were systematically studied by Fischer and the third author in \cite{RF17, Fischer}. 
	
	The inhomogeneous Sobolev spaces $H ^s(\G):=H^s_{\mathcal{R}}(\G), s \in \mathbb{R}$, associated to  positive Rockland operator  $\mathcal{R}$ of homogeneous degree $\nu$, is defined  as
	$$
	H ^s\left(\mathbb{G}\right):=\left\{f \in \mathcal{D}^{\prime}\left(\mathbb{G}\right):(I+\mathcal{R})^{s / \nu} f \in L^2\left(\mathbb{G}\right)\right\},
	$$
	with the norm $$\|f\|_{H ^s\left(\mathbb{G}\right)}:=\left\|(I+\mathcal{R})^{s / \nu} f\right\|_{L^2\left(\mathbb{G}\right)}.
	$$  
	Similarly, we define the homogeneous Sobolev space  $ \dot{H}^{p, s}_{\mathcal{R}}(\mathbb{G}):=\dot{H}^{p, s}(\mathbb{G})$ on $\G$  as the space of all  $f\in \mathcal{D}'(\mathbb{G})$ such that 
	$\mathcal{R}^{{s}/{\nu}}f\in L^p(\mathbb{G})$ with the norm
	$$\|f\|_{\dot{H}^{p, s}(\mathbb{G})}:=\left\|\mathcal{R}^{s / \nu} f\right\|_{L^p\left(\mathbb{G}\right)}.
	$$  
	The reason for omitting the subscript $\mathcal{R}$ is that 	
	these Sobolev spaces are independent of the choice of a Rockland operator $\mathcal{R}$.    
	
	Let  $\mathbb  G$ be a graded Lie group with homogeneous dimension $Q$.	  Then we use the following inequalities developed in \cite{30,Fischer, RF17}  throughout this work.

	\begin{itemize}
		\item  {\bf Sobolev inequality} \cite{Fischer, RF17}:   Let $s>0$ and $1<p<q<\infty$ be such that 	$$ 	\frac{s}{Q}=\frac{1}{p}-\frac{1}{q}.$$ Then   \begin{align}\label{eq177}
			\|f\|_{{L}^{q}(
				\mathbb G)}  \lesssim  \|f\|_{{\dot H} ^{p,s}(\mathbb G)}\simeq   \|\mathcal{R}^{\frac{s}{\nu}}f\|_{L^p(\mathbb G)}. 	\end{align}
		\item {\bf Gagliardo-Nirenberg  inequality} \cite{30}: 
		Let 
		$				s\in(0,1], 1<r<\frac{Q}{s},\text { and }~  2 \leq q \leq \frac{rQ}{Q-sr} 
		.$	Then  
		\begin{align}\label{eq16}
			\|u\|_{L^q(\mathbb G)} \lesssim\|u\|_{{\dot H} ^{r,s}(\mathbb G)}^\theta\|u\|_{L^2(\mathbb G)}^{1-\theta},
		\end{align}
		for $\theta=\left(\frac{1}{2}-\frac{1}{q}\right)/{\left(\frac{s}{Q}+\frac{1}{2}-\frac{1}{r}\right)}\in[0,1]$, provided that $\frac{s}{Q}+\frac{1}{2}\neq \frac{1}{r}$. 
		
	\end{itemize}
	
	\section{Philosophy of the  approach}\label{sec3}
	In this section, we will describe the approach to prove the main results of this paper.  First, we provide the notion of mild solutions to (\ref{eq0010}) in our framework. Consider the inhomogeneous system 
	\begin{align} \label{eq001000}
		\begin{cases}
			u_{tt}+\mathcal{R}u +u_{t} =F(t,x), & x\in  \mathbb{G},~t>0,\\
			u(0,x)=  u_0(x),  & x\in  \mathbb{G},\\ u_t(0, x)=  u_1(x), & x\in  \mathbb{G}.
		\end{cases}
	\end{align}
	By applying Duhamel’s principle, the solution to the above system can be written as 
	$$u(t, x)=   u_{0} *  E_{0}(t, x)+  u_{1} *  E_{1}(t, x)+\int_{0}^{t}F(s, x) *  E_{1}(t-s, x) \;ds,$$
	where   $*$ denotes the  group convolution product on $ \mathbb G$ with respect to the $x$ variable, and  $E_{0}$ and $E_{1}$  represent   the propagators to (\ref{eq001000}) in  the homogeneous case $F=0$   with initial data $\left(u_{0}, u_{1}\right)=\left(\delta_{0}, 0\right)$ and $\left(u_{0}, u_{1}\right)=$ $\left(0, \delta_{0}\right)$, respectively. Our main interest lies in considering power-type nonlinearities, that is, $F(t,x)=|u(t, x)|^p$, and this will always be the case throughout the paper.

	A function   $u$ is  said to be a {\it mild solution} to (\ref{eq0010})  on $[0, T]$ if $u$ is a fixed point for  the    integral operator $N: u \in X_s(T) \mapsto N u(t, x) ,$ given  by 
	\begin{align}\label{f2inr} 
		N u(t, x):=u^{\text{lin}}(t, x) + u^{\text{non}}(t, x),
	\end{align}	in the energy evolution space $X_s(T) \doteq \mathcal{C}\left([0, T], H^{1}(\mathbb{G})\right),  $ 
	equipped with the norm
	\begin{align}
		\|u\|_{X_s(T)}&:=\sup\limits_{t\in[0,T]}\left ( (1+t)^{\frac{\gamma}{\nu}} \|u(t,\cdot)\|_{L^2}+(1+t)^{\frac{1+\gamma}{\nu}}\|u(t,\cdot)\|_{ \dot H^1}\right )
	\end{align}
	with $\gamma>0,$
	where   $$u^{\text{lin}}(t, x)=   u_{0} *  E_{0}(t, x)+  u_{1} *  E_{1}(t, x)$$  
	is the solution to the corresponding linear Cauchy problem (\ref{eq0010}), and $u^{\text{non}}$ is an integral operator with the following representation
	$$ u^{\text{non}}(t, x)= \int_{0}^{t} F(s, x) *  E_{1}(t-s, x)  \;ds.$$
	According to the Duhamel’s principle, we will   prove the global-in-time existence and uniqueness of small data Sobolev solutions of low regularity to the semilinear damped wave equation (\ref{eq0010}) with
	the help of the Banach's  fixed point theorem argument.  We find a unique fixed point (say) $u^*$ of the operator $N$, which means $u^*=N u^* \in X_s(T)$ for all positive $T$. More precisely, to find such a unique fixed point,   we will establish two crucial inequalities of the form
	\begin{align}\label{Banach1}
		\|N u\|_{X_s(T)} & \leq C  \left\|\left(u_0, u_1\right)\right\|_{\mathcal{A}^s}+\|u\|_{X_s(T)}^p,  \end{align}
	and \begin{align}\label{Banach2}
		\|N u-N v\|_{X_s(T)} & \leq C\|u-v\|_{X_s(T)}\left[  \|u\|_{X_s(T)}^{p-1}+\|v\|_{X_s(T)}^{p-1}\right],
	\end{align}
	for any $u, v \in X_s(T)$ with initial data space $\mathcal{A}^s:=\ (H^1 \cap \dot{H}^{-\gamma}\ ) \times\ (L^2 \cap \dot{H}^{-\gamma}\ )$ and the positive constant constant $C$ is independent of $T$. 
	We will consider sufficiently small $\left\|\left(u_0, u_1\right)\right\|_{\mathcal{A}^s}<\varepsilon $ so that combining (\ref{Banach1}) with (\ref{Banach2}) we can apply  Banach's fixed point theorem to ensure that there exists a global-in-time small data unique Sobolev solution $u^*=Nu^* \in X_s(T)$ for all  $T>0$, which also gives the solution to (\ref{eq0010}). Here we want to note that the treatment of the power-type nonlinear term     is based on the applications of the Hardy-Littlewood-Sobolev inequality and the  Gagliardo-Nirenberg inequality on the graded Lie group $\mathbb G$.
	
	In the proof of  the global existence result  for $p=p_{\text{crit}}$ in the next section, the following integral inequality will be useful:
	Let $\alpha, \beta \in \mathbb{R}$. Then
	$$
	\int_0^t(1+t-\kappa)^{-\alpha}(1+\kappa)^{-\beta} \;d\kappa  \lesssim \begin{cases}(1+t)^{-\min \{\alpha, \beta\}} & \text{ if } \max \{\alpha, \beta\}>1; \\{}\\ (1+t)^{-\min \{\alpha, \beta\}} \log (2+t) & \text { if } \max \{\alpha, \beta\}=1; \\{}\\ (1+t)^{1-\alpha-\beta} & \text { if } \max \{\alpha, \beta\}<1.\end{cases}
	$$

	\section{ Global existence for critical higher order hypoelliptic damped wave equation} \label{sec4}	
	This section is devoted to proving the global-in-time existence of small data Sobolev solutions to (\ref{eq0010})  of lower regularity in the critical case.  Before going to  investigate the global-in-time existence result in the critical case,  we first present the following proposition. 
	\begin{prop}\label{Prop}
		Let ${\mathbb{G}}$ be a graded Lie group of homogeneous dimension $Q$ and let $\mathcal{R}$ be a positive Rockland operator of homogeneous degree $\nu \geq 2.$
		Let $s \in (0, 1]$ and  $(u_0, u_1) \in \left( H^s \cap L^m   \right) \times \left(L^2   \cap L^m  \right) $ for some $m \in(1,2]$. Then the solution $u \in \mathcal{C}^1\left([0, \infty), H^s  \right)$ of \eqref{Linear-system} satisfies  the following estimates:
		\begin{align}\label{s=00}
			\|u(t, \cdot)\|_{L^2  } \leq C(1+t)^{-\frac{Q}{\nu}\left(\frac{1}{m}-\frac{1}{2}\right)}  \left(\left\|u_0\right\|_{L^2   \cap L^m  }+\left\|u_1\right\|_{H ^{-1}   \cap L^m  }\right),\end{align}
		and	 
		\begin{align}\label{s=11}
			\left\|\mathcal{R}^{\frac{s}{\nu}} u(t, \cdot)\right\|_{L^2  }  \leq C(1+t)^{-\frac{Q}{\nu}\left(\frac{1}{m}-\frac{1}{2}\right)-\frac{s}{\nu}}\left(\left\|u_0\right\|_{H^s \cap L^m}+\left\|u_1\right\|_{H^{s-1}    \cap L^m  }\right).
		\end{align}
	\end{prop}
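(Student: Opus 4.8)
\emph{The plan is to diagonalise the problem with the group Fourier transform and reduce it to a scalar second-order ODE in the spectral parameter.} Applying $\mathcal{F}_{\mathbb{G}}$ to \eqref{Linear-system} and using $\mathcal{F}_{\mathbb{G}}(\mathcal{R}u)(\pi)=\pi(\mathcal{R})\widehat{u}(\pi)$ together with the diagonal representation \eqref{matrix} of $\pi(\mathcal{R})$, each matrix entry $[\widehat{u}(t,\pi)]_{jk}$ solves
\begin{align*}
\partial_t^2 [\widehat{u}]_{jk}+\partial_t[\widehat{u}]_{jk}+\pi_j^2\,[\widehat{u}]_{jk}=0,
\end{align*}
a constant-coefficient ODE with characteristic roots $\lambda_{\pm}=\tfrac12(-1\pm\sqrt{1-4\pi_j^2})$. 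Solving it produces the explicit propagator symbols $\widehat{E}_0(t,\pi)$ and $\widehat{E}_1(t,\pi)$, which are functions of $\pi(\mathcal{R})$ (hence diagonal), so that $\widehat{u}(t,\pi)=\widehat{E}_0(t,\pi)\widehat{u}_0(\pi)+\widehat{E}_1(t,\pi)\widehat{u}_1(\pi)$, matching the representation $u=u_0*E_0+u_1*E_1$ of Section \ref{sec3}.

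Next I would write $\xi:=\pi_j^2\in(0,\infty)$ and establish \emph{pointwise bounds on the scalar symbols}. Splitting the spectrum of $\mathcal{R}$ at $\xi=1/4$ into a low-frequency zone (overdamped, real roots with $\lambda_+\approx-\xi$, $\lambda_-\approx-1$) and a high-frequency zone (oscillatory, $\mathrm{Re}\,\lambda_\pm=-1/2$), one shows
\begin{align*}
|\widehat{E}_0(t,\xi)|+\langle\xi\rangle^{1/\nu}|\widehat{E}_1(t,\xi)|&\lesssim e^{-c\xi t}\quad(\xi\le\tfrac14),\\
|\widehat{E}_0(t,\xi)|+\langle\xi\rangle^{1/\nu}|\widehat{E}_1(t,\xi)|&\lesssim e^{-ct}\quad(\xi\ge\tfrac14),
\end{align*}
for some $c>0$; the gain $\langle\xi\rangle^{1/\nu}$ on $\widehat{E}_1$ (using $\nu\ge2$, whence $1/\nu\le1/2$) is exactly what pairs the $u_1$-term with the $H^{-1}$, resp.\ $H^{s-1}$, norm. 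Inserting the factor $\xi^{s/\nu}$ coming from $\mathcal{R}^{s/\nu}$ and using $\sup_{r>0}r^{s/\nu}e^{-cr}<\infty$ converts the low-frequency bound $\xi^{s/\nu}e^{-c\xi t}$ into an extra decay $t^{-s/\nu}$, which accounts for the additional $-s/\nu$ in the exponent of \eqref{s=11}.

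Then I would invoke Plancherel, $\|\mathcal{R}^{s/\nu}u(t)\|_{L^2}^2=\int_{\widehat{\mathbb{G}}}\|\pi(\mathcal{R})^{s/\nu}\widehat{u}(t,\pi)\|_{\mathrm{HS}}^2\,d\mu(\pi)$, and split the integral through the spectral projections of $\mathcal{R}$ onto $\{\xi\ge1/4\}$ and $\{\xi\le1/4\}$. On the high-frequency part the symbol bound yields a factor $e^{-ct}$ times a plain $L^2$ (resp.\ $H^s$, $H^{-1}$, $H^{s-1}$) estimate of the data, which decays faster than any power of $(1+t)$ and is harmless. On the low-frequency part, the $L^2$-data contribution is handled directly by the symbol bound, while for the $L^m$-data contribution with $m\in(1,2]$ I would apply H\"older in the direct integral to separate the frequency-integrated symbol from $\widehat{u}_0$, and then the Hausdorff--Young inequality $\|\widehat{u}_0\|_{L^{m'}(\widehat{\mathbb{G}})}\lesssim\|u_0\|_{L^m(\mathbb{G})}$. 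The frequency-integrated symbol is estimated by the heat-trace scaling $\int_{\widehat{\mathbb{G}}}\mathrm{Tr}\big(e^{-c\pi(\mathcal{R})t}\big)\,d\mu(\pi)\simeq t^{-Q/\nu}$ (equivalently $\|h_t\|_{L^b(\mathbb{G})}\simeq t^{-\frac{Q}{\nu}(1-1/b)}$ by homogeneity of the heat kernel $h_t=e^{-t\mathcal{R}}\delta_0$), which after matching the H\"older exponents produces exactly the rate $(1+t)^{-\frac{Q}{\nu}(\frac1m-\frac12)}$.

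\emph{The hard part will be the low-frequency $L^m\to L^2$ step}, since on the noncommutative dual $\widehat{\mathbb{G}}$ one cannot argue via a scalar Gaussian integral as on $\mathbb{R}^n$: Hausdorff--Young only delivers a Schatten-$m'$ norm, so the H\"older splitting must be carried out in the direct-integral/Schatten framework, and the decay exponent has to be read off from the heat trace (equivalently, from the homogeneity of $h_t$) rather than from an explicit symbol integration. Combining the two frequency zones and the two data terms, and finally replacing $t^{-a}$ by $(1+t)^{-a}$ (trivial for $t\ge1$, and bounded near $t=0$ by continuity of the propagators on the relevant Sobolev spaces), yields \eqref{s=00} and \eqref{s=11}.
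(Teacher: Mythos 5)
Your outline is workable, but it takes a genuinely different and far longer route than the paper. The paper's proof is essentially two lines: for $m\in(1,2)$ set $\gamma:=Q\left(\frac{1}{m}-\frac{1}{2}\right)\in\left(0,\frac{Q}{2}\right)$ and use the Sobolev inequality \eqref{eq177} in its dual form to get the embedding $\|v\|_{\dot H^{-\gamma}}\simeq\|\mathcal{R}^{-\gamma/\nu}v\|_{L^2}\lesssim\|v\|_{L^m}$; then the claimed rates are exactly the $(1+t)^{-\frac{s+\gamma}{\nu}}$ decay estimates for data in $H^s\cap\dot H^{-\gamma}$ already proved in \cite[Theorem 1.1]{DKMR24} (the case $m=2$ being $\gamma=0$ there). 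You instead re-derive the linear decay from scratch: diagonalisation of $\pi(\mathcal{R})$, pointwise symbol bounds in the two spectral zones, and a Schatten--H\"older/Hausdorff--Young/heat-trace argument for the low-frequency $L^m\to L^2$ piece. That low-frequency step is indeed the delicate point and your proposed resolution is sound — with $\frac1q=\frac1m-\frac12$ the H\"older exponents match so that the symbol contributes $\bigl(\int_{\widehat{\mathbb{G}}}\operatorname{Tr}(e^{-cq\,\pi(\mathcal{R})t})\,d\mu(\pi)\bigr)^{1/q}\simeq t^{-\frac{Q}{\nu}(\frac1m-\frac12)}$ — but it reproves the machinery the paper is deliberately citing. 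What your route buys is self-containedness and, in principle, the endpoint $m=1$ (where the embedding $L^1\hookrightarrow\dot H^{-Q/2}$ fails and the paper's reduction breaks down); what the paper's route buys is brevity, since the whole content of the proposition is the observation that the Hardy--Littlewood--Sobolev embedding converts $L^m$-data into $\dot H^{-\gamma}$-data with $\frac{\gamma}{\nu}=\frac{Q}{\nu}\left(\frac1m-\frac12\right)$. If you keep your version, you should still handle $t\lesssim1$ separately (your heat-trace bound degenerates as $t\to0$), as you note at the end.
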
 
	\begin{proof} The case $m=2$ directly follows from \cite[Theorem 1.1]{DKMR24} by taking $\gamma=0.$
		For  $\gamma\in (0, \frac{Q}{2})$ and for $1<m<2$  with 	$$ 	\frac{\gamma}{Q}=\frac{1}{m}-\frac{1}{2},$$ from Sobolev inequality (\ref{eq177}), we have  \begin{align*}
			\|u(t, \cdot )\|_{{\dot H} ^{-\gamma }}\simeq	\| \mathcal{R}^{-\frac{\gamma}{\nu}} u(t, \cdot )\|_{{L}^{2}}  \lesssim     \| u(t, \cdot )\|_{L^m},	\end{align*}
		and hence
		\begin{align}\label{eq1177}
			\|u(t, \cdot )\|_{H^{s-1}  \cap {\dot H}^{-\gamma }} \lesssim     \| u(t, \cdot )\|_{H^{s-1}  \cap L^m}.
		\end{align}
		Now using (\ref{eq1177}), from \cite[Theorem 1.1]{DKMR24} with $s\in (0, 1]$, we get 
		$$
		\begin{aligned}
			\left\|\mathcal{R}^{\frac{s}{\nu}} u(t, \cdot)\right\|_{L^2} &= C\|u(t, \cdot)\|_{\dot{H}^s} \\
			&\lesssim(1+t)^{-\frac{s+\gamma}{\nu}}\left(\left\|u_0\right\|_{H^s \cap\dot {H}^{-\gamma}}+\left\|u_1\right\|_{H^{s-1} \cap \dot{H}^{-\gamma}}\right) \\
			& \lesssim(1+t)^{-\frac{Q}{\nu}\left(\frac{1}{m}-\frac{1}{2}\right)-\frac{s}{\nu}}\left(\left\|u_0\right\|_{H^s \cap L^m}+\left\|u_1\right\|_{H^{s-1}    \cap L^m  }\right) 
		\end{aligned}
		$$
		and 
		$$
		\begin{aligned}
			\|u(t, \cdot)\|_{L^2} 
			&  \lesssim(1+t)^{-\frac{\gamma}{\nu}}\left(\left\|u_0\right\|_{L^2 \cap\dot {H}^{-\gamma}}+\left\|u_1\right\|_{H^{-1} \cap \dot{H}^{-\gamma}}\right) \\
			&  \lesssim(1+t)^{-\frac{Q}{\nu}\left(\frac{1}{m}-\frac{1}{2}\right)} \left(\left\|u_0\right\|_{L^2   \cap L^m  } +\left\|u_1\right\|_{H^{-1}   \cap L^m  }\right)
		\end{aligned}
		$$
		for all $t>0.$
	\end{proof}
	
	\begin{rem} \label{rem4.2}
		Note that, the estimate (\ref{s=00}) can also be written as (using Sobolev inequality)    	\begin{align*} 
			\|u(t, \cdot)\|_{L^2  } &\leq C (1+t)^{-\frac{Q}{\nu}\left(\frac{1}{m}-\frac{1}{2}\right)}  \left(\left\|u_0\right\|_{L^2  \cap L^m  }+\left\|u_1\right\|_{H ^{-1}   \cap L^m  }\right)\\
			& \leq C'(1+t)^{-\frac{Q}{\nu}\left(\frac{1}{m}-\frac{1}{2}\right)}  \left(\left\|u_0\right\|_{H^s \cap L^m}+\left\|u_1\right\|_{H^{s-1}    \cap L^m  }\right).
		\end{align*}
		Then combining it with (\ref{s=11}), for $j=0, 1$, one can write 
		\begin{align}\label{s=01}
			\left\|\mathcal{R}^{\frac{js}{\nu}} u(t, \cdot)\right\|_{L^2  }  \leq C(1+t)^{-\frac{Q}{\nu}\left(\frac{1}{m}-\frac{1}{2}\right)-\frac{js}{\nu}}\left(\left\|u_0\right\|_{H^s \cap L^m}+\left\|u_1\right\|_{H^{s-1}    \cap L^m  }\right).
		\end{align}
	\end{rem}
	Now we are in a position to prove the global-in-time existence of small data Sobolev solutions to (\ref{eq0010})  of lower regularity in the critical case as follows. 
	\begin{theorem}\label{well-posed}  Let ${\mathbb{G}}$ be a graded Lie group of homogeneous dimension $Q$ and let $\mathcal{R}$ be a positive Rockland operator of homogeneous degree $\nu \geq 2.$ Let $s \in (0, 1].$ Assume that 
		$$ \begin{cases}
			\gamma \in\left(0, \frac{Q}{2}\right) 
			\quad  \hspace{1.2cm} \text{if}  \quad Q=1, 2;\\
			\gamma \in\left(0, \min\{\frac{Q}{2}, \tilde{\gamma} \}\right) 
			\quad  \hspace{1.4cm} \text{if}  \quad Q=3, 4;\\
			\gamma \in \left( \frac{Q\nu}{2s}-\frac{Q}{2} -\nu, \min\{\frac{Q}{2}, \tilde{\gamma} \}\right) 
			\quad  \hspace{0.2cm} \text{if}  \quad Q=5, 6,
		\end{cases}$$ where $\tilde{\gamma}$ denotes the positive root of  the quadratic equation $2 \tilde{\gamma}^2+Q \tilde{\gamma}-\nu Q=0$, i.e., $\tilde{\gamma}= \frac{-Q+\sqrt {Q^2+8\nu Q}}{4}$ for $Q\geq 3$.   Also,  let the exponent $p$ satisfy 
		\begin{align}\label{eq24}
			p=p_{\text {Crit }}(Q, \gamma, \nu):=1+\frac{2\nu}{Q+2\gamma}.  		\end{align}
		Then, there exists a small positive constant $\varepsilon_0$ such that for any $\left(u_0, u_1\right) \in \mathcal{A}^{s }:=\ (H^s \cap \dot{H}^{-\gamma}\ ) \times\ (L^2 \cap \dot{H}^{-\gamma}\ )$ satisfying $\left\|\left(u_0, u_1\right)\right\|_{\mathcal{A}^{s}}=\varepsilon \in\left(0, \varepsilon_0\right]$, the Cauchy problem for the semilinear damped wave equation (\ref{eq0010}) has a uniquely determined Sobolev solution
		$$
		u \in \mathcal{C}\left([0, \infty), H^s\right).
		$$

	\end{theorem}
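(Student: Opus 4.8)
The plan is to obtain $u$ as the unique fixed point of the operator $N$ defined in \eqref{f2inr} in the space $X_s(T)=\mathcal C([0,T],H^s(\mathbb G))$ equipped with the time-weighted norm
\[
\|u\|_{X_s(T)}:=\sup_{t\in[0,T]}\Big((1+t)^{\frac{\gamma}{\nu}}\|u(t,\cdot)\|_{L^2}+(1+t)^{\frac{s+\gamma}{\nu}}\|u(t,\cdot)\|_{\dot H^{s}}\Big),
\]
and to conclude via Banach's fixed point theorem once \eqref{Banach1} and \eqref{Banach2} are proved with a constant independent of $T$. The linear part is handled directly by Proposition \ref{Prop} (equivalently \cite[Theorem 1.1]{DKMR24} via the embedding $L^{m_0}\hookrightarrow\dot H^{-\gamma}$ with $m_0:=\frac{2Q}{Q+2\gamma}$, so that $\frac{\gamma}{Q}=\frac1{m_0}-\frac12$), which yields $\|u^{\mathrm{lin}}\|_{X_s(T)}\lesssim\|(u_0,u_1)\|_{\mathcal A^{s}}$; thus everything reduces to controlling the Duhamel term $u^{\mathrm{non}}$.

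For $u^{\mathrm{non}}$ I would apply the decay estimates of Proposition \ref{Prop} to the source $F(\kappa)=|u(\kappa,\cdot)|^p$ in the role of the datum $u_1$ (with $u_0=0$), giving for $j=0,1$
\[
\big\|\mathcal R^{\frac{js}{\nu}}u^{\mathrm{non}}(t,\cdot)\big\|_{L^2}\lesssim\int_0^t(1+t-\kappa)^{-\alpha(m)-\frac{js}{\nu}}\,\big\||u(\kappa,\cdot)|^p\big\|_{H^{js-1}\cap L^{m}}\,d\kappa,\qquad \alpha(m):=\tfrac{Q}{\nu}\big(\tfrac1m-\tfrac12\big),
\]
and then estimating $\big\||u(\kappa,\cdot)|^p\big\|_{L^q}=\|u(\kappa,\cdot)\|_{L^{qp}}^p$ by the Gagliardo--Nirenberg inequality \eqref{eq16} with $r=2$, for $q=m$ and for $q=2$ (the latter bounding the $H^{js-1}$-factor, since $js-1\le0$). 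Inserting the definition of $\|\cdot\|_{X_s}$, the $L^{m}$-part of the source decays like $(1+\kappa)^{-\beta(m)}$ and the $L^2$-part like $(1+\kappa)^{-\beta_2}$ with $\beta_2=1+\frac{\gamma}{\nu}>1$, and one checks the key identity $\alpha(m)+\beta(m)=1+\frac{\gamma}{\nu}$ for all admissible $m$. The three hypotheses on $\gamma$ are exactly the admissibility of the relevant exponents: $\gamma<\frac Q2$ is $m_0>1$; the strict inequality $\gamma<\tilde{\gamma}$, i.e. $2\gamma^2+Q\gamma-\nu Q<0$, is $m_0p>2$; and the lower bound $\gamma>\frac{Q\nu}{2s}-\frac Q2-\nu$, relevant only for $Q=5,6$, is precisely $2p\le\frac{2Q}{Q-2s}$, the endpoint condition of \eqref{eq16} for $q=2p$.

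The crux of the critical case, and the reason it was left open in \cite{DKMR24}, is that the natural choice $m=m_0$ produces $\beta(m_0)=1$: with $\alpha(m_0)=\frac{\gamma}{\nu}<1$ the integral inequality stated at the end of Section \ref{sec3} falls into its borderline case $\max\{\alpha,\beta\}=1$ and generates the forbidden logarithmic factor $(1+t)^{-\gamma/\nu}\log(2+t)$. My way around this is to avoid $m_0$ altogether by splitting $[0,t]=[0,t/2]\cup[t/2,t]$ and measuring the source at different integrability exponents on the two pieces. On $[0,t/2]$ I take $m\in(\max\{1,2/p\},m_0)$ — a nonempty range precisely because $m_0>1$ and $m_0p>2$ — for which both $\alpha(m)<\frac{Q}{Q+2\gamma}<1$ and $\beta(m)<\beta(m_0)=1$, so the inequality now lies in the regime $\max\{\alpha,\beta\}<1$ and, using $\alpha(m)+\beta(m)=1+\frac{\gamma}{\nu}$, returns exactly $(1+t)^{-(s+\gamma)/\nu}$ (resp. $(1+t)^{-\gamma/\nu}$). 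On $[t/2,t]$ I use the $L^2$-part of the source, whose decay $\beta_2>1$ together with $(1+\kappa)\sim(1+t)$ reproduces the same weights without any borderline phenomenon. This removes the logarithm and closes \eqref{Banach1}.

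The Lipschitz-type estimate \eqref{Banach2} follows by the same scheme after applying the elementary bound $\big||u|^p-|v|^p\big|\lesssim\big(|u|^{p-1}+|v|^{p-1}\big)|u-v|$ together with Hölder's inequality and \eqref{eq16}. With both \eqref{Banach1} and \eqref{Banach2} in hand, choosing $\varepsilon_0$ sufficiently small makes $N$ a contraction on a small closed ball of $X_s(T)$, uniformly in $T>0$, and Banach's fixed point theorem produces the unique global solution $u\in\mathcal C([0,\infty),H^s)$. I expect the genuinely delicate point to be the verification, for the higher-order ($\dot H^s$) component, that the two-piece choice of exponents keeps every resulting time exponent strictly off the value $1$ across the full admissible range of $(Q,\nu,\gamma,s)$ — this is where the quadratic threshold $\tilde{\gamma}$ and the lower bound on $\gamma$ for $Q=5,6$ are indispensable.
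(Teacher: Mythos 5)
Your proposal is correct and follows essentially the same route as the paper: the same weighted space $X_s(T)$, the same splitting of the Duhamel integral at $t/2$ with the $L^2$-part of the source on $[t/2,t]$ and a strictly smaller integrability exponent on $[0,t/2]$ to dodge the borderline decay rate $1$ produced by $m_0=\tfrac{2Q}{Q+2\gamma}$, and the same identification of the hypotheses on $\gamma$ with the Gagliardo--Nirenberg admissibility conditions. The only (harmless) difference is that the paper fixes $m=\max\{1,2/p\}$ on $[0,t/2]$, whereas you take an interior point of $\left(\max\{1,2/p\},m_0\right)$; both choices eliminate the logarithmic loss.
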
 
	\begin{proof}
		Our main aim is to prove the following 	two crucial inequalities 
		\begin{align*}
			\|N u\|_{X_s(T)} & \lesssim\left\|\left(u_{0}, u_{1}\right)\right\|_{ \mathcal{A}^s}+\|u\|_{X_s(T)}^{p}, \\	\|N u-N v\|_{X_s(T)} &\lesssim \|u-v\|_{X_s(T)}\left(\|u\|_{X_s(T)}^{p-1}+\|v\|_{X_s(T)}^{p-1}\right),
		\end{align*} 	     
		where the energy evolution space $X_s(T) \doteq \mathcal{C}\left([0, T], H^{s}(\mathbb{G})\right)$ is 
		equipped with the norm
		\begin{align} \label{Evolution norm}
			\|u\|_{X_s(T)}&:=\sup\limits_{t\in[0,T]}\left ( (1+t)^{\frac{\gamma}{\nu}} \|u(t,\cdot)\|_{L^2}+(1+t)^{\frac{s+\gamma}{\nu}}\|u(t,\cdot)\|_{ \dot H^s}\right ).
		\end{align}

		Now from   the estimate \cite[Theorem 1.1]{DKMR24}, for  $s \geq 0$ and $\gamma \in \mathbb{R}$ such that $s+\gamma \geq  0$,  we recall that 
		\begin{equation} \label{4er}
			\|u(t, \cdot)\|_{\dot{{H}} ^s(\mathbb{G})} \lesssim(1+t)^{-\frac{s+\gamma}{\nu}}\left(\left\|u_0\right\|_{H^s  \cap \dot {{H}} ^{-\gamma} }+\left\|u_1\right\|_{H ^{s-1}  \cap \dot{{H}} ^{-\gamma} }\right) .
		\end{equation}
		Now depending on the value of $\gamma $, from \eqref{4er},  	using  the Sobolev embedding   $L^2\subset H ^{s-1}$ for $s\leq 1$ and $ H ^{s} \subset L^2$ for $s\geq0$, we have the following   crucial estimates for Sobolev solutions to the linear Cauchy problem.
			For $\gamma>0$ and  $s \in [0, 1]$, 	 	we have 
			\begin{align}\label{eq14}
				\|u(t, \cdot)\|_{\dot{{H}} ^s} \lesssim(1+t)^{-\frac{s+\gamma}{\nu}}\left(\left\|u_0\right\|_{ H^s \cap {\dot{H}} ^{-\gamma}}+\left\|u_1\right\|_{L^2 \cap {\dot{H}} ^{-\gamma}}\right).
			\end{align}
			Thus 
			\begin{align}\label{eq1444}
				\|u(t, \cdot)\|_{L^2} &\lesssim(1+t)^{-\frac{\gamma}{\nu}}\left(\left\|u_0\right\|_{ L^2 \cap {\dot{H}} ^{-\gamma}}+\left\|u_1\right\|_{L^2 \cap {\dot{H}} ^{-\gamma}}\right) 
			\end{align}
			and for $s\in (0, 1]$
			\begin{align}\label{eq14444}
				\|u(t, \cdot)\|_{\dot{{H}} ^s} \lesssim(1+t)^{-\frac{s+\gamma}{\nu}}\left(\left\|u_0\right\|_{H^s  \cap \dot {{H}} ^{-\gamma} }+\left\|u_1\right\|_{H ^{s-1}  \cap \dot{{H}} ^{-\gamma} }\right) .
			\end{align}
			Now from (\ref{eq1444}) and   (\ref{eq14444}),   we can write
			\begin{align*}
				(1+t)^{\frac{s+\gamma}{\nu}}\|u(t, \cdot)\|_{{\dot {H}} ^s} +(1+t)^{\frac{\gamma}{\nu}}\|u(t, \cdot)\|_{L^2} \lesssim \left(\left\|u_0\right\|_{H^s \cap {\dot{H}} ^{-\gamma}}+\left\|u_1\right\|_{L^2 \cap {\dot {H}} ^{-\gamma}}\right)=\left\|\left(u_{0}, u_{1}\right)\right\|_{ \mathcal{A}^s}.
			\end{align*}
			Thus from above,   the  solution $u^{\text{lin}}$  
			to the linear problem (\ref{eq0010}) satisfies   \begin{align}\label{2number100}
				\|u^{\text{lin}}\|_{X_s(T)} \leq C_1   \left\|\left(u_{0}, u_{1}\right)\right\|_{ \mathcal{A}^s} 
			\end{align} 
			where $C_1>0$ is independent of $T$.
			
			Now we will   evaluate $
			\| u^{\text{non}}-v^{\text{non}}\|_{X_s(T)}$ and try to prove that $$	\|u^{\text{non}}-v^{\text{non}}\|_{X_s(T)} \leq C_2 \|u-v\|_{X_s(T)}\left(\|u\|_{X_s(T)}^{p-1}+\|v\|_{X_s(T)}^{p-1}\right),$$  for some $C_2>0$ independent of $T$. Notice that 
			$$
			u^{\text{non}}-v^{\text{non}}(t, \cdot )=\int_0^t E_1(t-\kappa, \cdot) *\left(|u|^p-|v|^p\right)  {d} \kappa.
			$$
			In order to estimate $\|u^{\text{non}}-v^{\text{non}}\|_{X_s(T)}$, we   first evaluate  $\| \mathcal{R}^{\frac{sj}{\nu}}(u^{\text{non}}-v^{\text{non}}) \|_{L^2}$ for $j=0, 1$ with the help of  Proposition \ref{Prop}.
			
			Now applying the Gagliardo-Nirenberg  inequality (\ref{eq16}), for any $u\in X_s(T)$, we get
			$$	\left \|u(t, \cdot)\right\|_{L^{q}}\lesssim\|u(t, \cdot)\|_{\dot {H}^s}^{\theta} \|u(t, \cdot)\|_{L^2 }^{(1-\theta)}$$
			for $t\in [0, T]$ with $\theta=\frac{Q}{s}(\frac{1}{2}-\frac{1}{q})\in [0, 1]$. The above inequality also can be written as  
			\begin{align}\label{eqq21}\nonumber
				\left \|u(t, \cdot)\right\|_{L^{q}}
				&= (1+t)^{-\frac{1}{\nu}\left[s \theta+\gamma \right ]}  \left\{ (1+t)^{\frac{(s+\gamma)}{\nu} } \|u(t, \cdot)\|_{\dot {H}^s} \right\}^{\theta}  \left\{ (1+t)^{\frac{\gamma}{\nu} } \|u(t, \cdot)\|_{L^2}\right\}^{(1-\theta)}\\ \nonumber
				&\leq (1+t)^{-\frac{1}{\nu}\left[s \theta+\gamma \right ]}        \left\| u\right\|_{ X_s(T)}\\
				&= (1+t)^{-\frac{1}{\nu}\left[ {Q}(\frac{1}{2}-\frac{1}{q})+\gamma \right ]}        \left\| u\right\|_{ X_s(T)},
			\end{align}
			for $2\leq  q\leq \frac{2Q}{Q-2s}. 		$			
			Now for $j=0,1$, we have 
			$$
			\begin{aligned}
				\| \mathcal{R}^{\frac{sj}{\nu}}(u^{\text{non}}-v^{\text{non}})(t, \cdot ) \|_{L^2}&=\left\|\int_0^t \mathcal{R}^{\frac{sj}{\nu}} E_1(t-\kappa, \cdot) *\left(|u|^p-|v|^p\right)  {d} \kappa\right \|_{L^2}\\
				&\leq  \int_0^t \left\| \mathcal{R}^{\frac{sj}{\nu}} E_1(t-\kappa, \cdot) *\left(|u|^p-|v|^p\right) \right \|_{L^2} {d} \kappa.
			\end{aligned}
			$$		
			Next, note that   $$\tilde{u}(t,x):=(    E_1(t, \cdot) *|u|^p  )(x)$$ is also a solution to the linear system (\ref{Linear-system}) with  initial data $u_0=0$, $u_1=|u|^p $.  To estimate    Duhamel’s term on the interval  $[0, t]$, from  Proposition  \ref{Prop} (also see \eqref{s=01} of Remark \ref{rem4.2} ) with $u_0=0$, $u_1=|u|^p-|v|^p$ and 	 by  the Sobolev embedding   $L^2\subset H ^{s-1}$ for $s< 1$, we obtain  
			\begin{align}\label{1111}\nonumber
				\| \mathcal{R}^{\frac{sj}{\nu}}(u^{\text{non}}-v^{\text{non}}) (t, \cdot )\|_{L^2}&\leq   \int_0^t (1+t-\kappa )^{-\frac{Q}{\nu}\left(\frac{1}{m}-\frac{1}{2}\right)-\frac{sj}{\nu}} \left\||u|^p-|v|^p\right\|_{H^{s-1}  \cap L^m  }   {d} \kappa \nonumber \\
				&\leq   \int_0^t (1+t-\kappa )^{-\frac{Q}{\nu}\left(\frac{1}{m}-\frac{1}{2}\right)-\frac{sj}{\nu}} \left\||u|^p-|v|^p\right\|_{L^2   \cap L^m  }   {d} \kappa\\\nonumber
				&  \leq \int_0^{t / 2}(1+t-\kappa )^{-\frac{Q}{\nu}\left(\frac{1}{m}-\frac{1}{2}\right)-\frac{sj}{\nu}} \|(|u|^p-|v|^p)(\kappa, \cdot)\|_{L^2 \cap L^{\frac{2}{p}} }\;d \kappa \\
				& \quad+\int_{t / 2}^t(1+t-\kappa )^{-\frac{sj}{\nu}} \|(|u|^p-|v|^p)(\kappa, \cdot)\|_{L^2} \;d \kappa,
			\end{align}
			where we consider  $m = \frac{2}{p} $ for $p\in (1, 2)$ and $m = 1$ for $p \geq 2$, however,  we use $m = 2$ for  the interval $ [\frac{t}{2}, t]$. 
			Now using the fact that  $$||u|^p-|v|^p|\leq p |u-v|(|u|^{p-1}+|v|^{p-1}) $$ along with      H\"older's inequality, we   obtain
			\begin{align}\label{eq25}
				\left\||u(\kappa, \cdot)|^p-|v(\kappa, \cdot)|^p\right\|_{L^2} \lesssim\|u(\kappa, \cdot)-v(\kappa, \cdot)\|_{L^{2 p}}\left(\|u(\kappa, \cdot)\|_{L^{2 p}}^{p-1}+\|v(\kappa, \cdot)\|_{L^{2 p}}^{p-1}\right) .
			\end{align}
			Now we have to  estimate three terms on the right-hand side of the previous inequality (\ref{eq25}).    Applying  the Gagliardo-Nirenberg inequality (\ref{eq16})   for   each   term that appeared on the right-hand side of (\ref{eq25}), we get 
			\begin{align}\label{Final2}\nonumber
				& 	\left \| u(\kappa, \cdot)-v(\kappa, \cdot)\right\|_{L^{2p}}\\\nonumber
				&\lesssim\| u(\kappa, \cdot)-v(\kappa, \cdot)\|_{\dot {H}^s}^{\theta_1} \|u(\kappa, \cdot)\|_{L^2 }^{(1-\theta_1)}\\\nonumber
				&= (1+\kappa)^{-\frac{1}{\nu}\left(  {\gamma}+s {\theta_1}\right)  }  \left\{ (1+\kappa)^{\frac{(s+\gamma)}{\nu} } \|u(\kappa, \cdot)-v(\kappa, \cdot)\|_{\dot {H}^s} \right\}^{\theta_1}   \left\{ (1+\kappa)^{\frac{\gamma}{\nu} } \|u(\kappa, \cdot)-v(\kappa, \cdot)\|_{L^2 }\right\}^{(1-\theta_1)}\\
				&\lesssim (1+\kappa)^{-\frac{1}{\nu}\left(  {\gamma}+ s{\theta_1}\right)  }\left\| u-v \right\|_{ X_s(T)}
			\end{align}
			and 
			\begin{align}\label{Final1}\nonumber
				&	 	\left \| u(\kappa, \cdot)\right\|_{L^{2p}}^{p-1}\\\nonumber
				&\lesssim\| u(\kappa, \cdot)\|_{\dot {H}^s(\G)}^{(p-1)\theta_1} \|u(\kappa, \cdot)\|_{L^2(\G)}^{(p-1)(1-\theta_1)}\\\nonumber
				&= (1+\kappa)^{-\frac{(p-1)}{\nu}  (s\theta_1+\gamma)}  \left\{ (1+\kappa)^{\frac{(s+\gamma)}{\nu} } \|u(\kappa, \cdot)\|_{\dot {H}^s(\G)} \right\}^{(p-1)\theta_1}   \left\{ (1+\kappa)^{\frac{\gamma}{\nu} } \|u(\kappa, \cdot)\|_{L^2(\G)}\right\}^{(p-1)(1-\theta_1)}\\
				&\lesssim (1+\kappa)^{-\frac{(p-1)}{\nu}  (s\theta_1+\gamma)} \left\| u\right\|_{ X_s(T)}^{p-1},
			\end{align}
			for $\kappa\in [0, T]$ with $\theta_1=\frac{Q}{s}(\frac{1}{2}-\frac{1}{2p})\in [0, 1]$.
			Using the above estimates, (\ref{eq25}) reduces to 
			\begin{align}\label{eq444}\nonumber
				\left\||u(\kappa, \cdot)|^p-|v(\kappa, \cdot)|^p\right\|_{L^2} &\leq C  (1+\kappa)^{   -\frac{p}{\nu}  (s\theta_1+\gamma)} \left\| u-v \right\|_{ X_s(T)} \left( \left\| u\right\|_{ X_s(T)}^{p-1}+ \left\| v\right\|_{ X_s(T)}^{p-1}\right)\\\nonumber
				&= C  (1+\kappa)^{ -\frac{Q(p-1)}{2\nu}  -\frac{p\gamma}{\nu }} \left\| u-v \right\|_{ X_s(T)} \left( \left\| u\right\|_{ X_s(T)}^{p-1}+ \left\| v\right\|_{ X_s(T)}^{p-1}\right)\\ 
				&=  C  (1+\kappa)^{ -1 -\frac{\gamma}{\nu }} \left\| u-v \right\|_{ X_s(T)} \left( \left\| u\right\|_{ X_s(T)}^{p-1}+ \left\| v\right\|_{ X_s(T)}^{p-1}\right)
			\end{align}
			where in the  last step we used that $	p=p_{\text {Crit }}(Q, \gamma, \nu):=1+\frac{2\nu}{Q+2\gamma}.$ Then the second integral in (\ref{1111}) becomes
			\begin{align}\label{2222}\nonumber
				&	\int_{t / 2}^t(1+t-\kappa )^{-\frac{js}{\nu}} \|(|u|^p-|v|^p)(\kappa, \cdot)\|_{L^2} \;d \kappa\\\nonumber
				& \leq C  \left\| u-v \right\|_{ X_s(T)} \left( \left\| u\right\|_{ X_s(T)}^{p-1}+ \left\| v\right\|_{ X_s(T)}^{p-1}\right)	\int_{t / 2}^t(1+t-\kappa )^{-\frac{js}{\nu}}  (1+\kappa)^{ -1  -\frac{\gamma}{\nu }} \;d \kappa\\\nonumber
				& \leq C  (1+t)^{ -1 -\frac{\gamma}{\nu }}  \left\| u-v \right\|_{ X_s(T)} \left( \left\| u\right\|_{ X_s(T)}^{p-1}+ \left\| v\right\|_{ X_s(T)}^{p-1}\right)	\int_{t / 2}^t(1+t-\kappa )^{-\frac{js}{\nu}}  \;d \kappa\\
				&	\leq C (1+t)^{ -\frac{\gamma+js}{\nu }}   \left\| u-v \right\|_{ X_s(T)} \left( \left\| u\right\|_{ X_s(T)}^{p-1}+ \left\| v\right\|_{ X_s(T)}^{p-1}\right).
			\end{align}
			For the  first integral of (\ref{1111}),  we first consider  $p \geq 2$ so that $m=1$. Then 
			$$	\left\||u(\kappa, \cdot)|^p-|v(\kappa, \cdot)|^p\right\|_{L^1} \lesssim\|u(\kappa, \cdot)-v(\kappa, \cdot)\|_{L^{p}}\left(\|u(\kappa, \cdot)\|_{L^{p}}^{p-1}+\|v(\kappa, \cdot)\|_{L^{p}}^{p-1}\right) $$
			and proceeding similarly as in (\ref{Final2}), \eqref{Final1}, and   (\ref{eq444}) with $\theta_1=\frac{Q}{s}(\frac{1}{2}-\frac{1}{p}) $,  we get 
			\begin{align}\label{L^1} 
				\left\||u(\kappa, \cdot)|^p-|v(\kappa, \cdot)|^p\right\|_{L^1}  \leq  C  (1+\kappa)^{\frac{Q}{2\nu} -1 -\frac{\gamma}{\nu }} \left\| u-v \right\|_{ X_s(T)} \left( \left\| u\right\|_{ X_s(T)}^{p-1}+ \left\| v\right\|_{ X_s(T)}^{p-1}\right).
			\end{align}
			Then using the fact that $\gamma\in (0, \frac{Q}{2})$, i.e., $\frac{Q}{2\nu}-\frac{\gamma}{\nu}-1>-1$ along with the $L^2$ and $L^1$ norm of $ |u(\kappa, \cdot)|^p-|v(\kappa, \cdot)|^p $ from (\ref{eq444}) and (\ref{L^1}),  the first integral of \eqref{1111} becomes 
			\begin{align}\label{3333}\nonumber
				&	\int_0^{\frac{t}{2}} (1+t-\kappa )^{-\frac{Q}{2\nu} -\frac{sj}{\nu}} \left\||u(\kappa, \cdot)|^p-|v(\kappa, \cdot)|^p\right\|_{L^2   \cap L^1  }   {d} \kappa\\\nonumber
				&	=\int_0^{\frac{t}{2}} (1+t-\kappa )^{-\frac{Q}{2\nu} -\frac{sj}{\nu}}  \left(  \left\||u(\kappa, \cdot)|^p-|v(\kappa, \cdot)|^p\right\|_{L^1}+ \left\||u(\kappa, \cdot)|^p-|v(\kappa, \cdot)|^p\right\|_{L^2   }\right)   {d} \kappa\\\nonumber
				&	\lesssim  (1+t )^{-\frac{Q}{2\nu} -\frac{sj}{\nu}} \int_0^{\frac{t}{2}}    (1+\kappa)^{ -1 -\frac{\gamma}{\nu }} \left\| u-v \right\|_{ X_s(T)} \left( \left\| u\right\|_{ X_s(T)}^{p-1}+ \left\| v\right\|_{ X_s(T)}^{p-1}\right)  {d} \kappa\\\nonumber
				&	\quad\quad+ (1+t )^{-\frac{Q}{2\nu} -\frac{sj}{\nu}}  \int_0^{\frac{t}{2}}   (1+\kappa)^{\frac{Q}{2\nu} -1 -\frac{\gamma}{\nu }} \left\| u-v \right\|_{ X_s(T)} \left( \left\| u\right\|_{ X_s(T)}^{p-1}+ \left\| v\right\|_{ X_s(T)}^{p-1}\right) {d} \kappa\\\nonumber
				&	 \lesssim (1+t )^{-\frac{Q}{2\nu} -\frac{sj}{\nu}}  \int_0^{\frac{t}{2}} (1+\kappa)^{\frac{Q}{2\nu} -1 -\frac{\gamma}{\nu }} \left\| u-v \right\|_{ X_s(T)} \left( \left\| u\right\|_{ X_s(T)}^{p-1}+ \left\| v\right\|_{ X_s(T)}^{p-1}\right)  {d} \kappa\\\nonumber
				&	 = C(1+t )^{-\frac{Q}{2\nu} -\frac{sj}{\nu}}   \left\| u-v \right\|_{ X_s(T)} \left( \left\| u\right\|_{ X_s(T)}^{p-1}+ \left\| v\right\|_{ X_s(T)}^{p-1}\right) \int_0^{\frac{t}{2}} (1+\kappa)^{\frac{Q}{2\nu} -1 -\frac{\gamma}{\nu }}   {d} \kappa\\
				&	 \leq C' (1+t )^{ -\frac{sj+\gamma}{\nu}}   \left\| u-v \right\|_{ X_s(T)} \left( \left\| u\right\|_{ X_s(T)}^{p-1}+ \left\| v\right\|_{ X_s(T)}^{p-1}\right) .
			\end{align}
			Now  considering the case  $p \leq 2$ so that $m=\frac{2}{p}$, we get 
			\begin{align*}
				\left\||u(\kappa, \cdot)|^p-|v(\kappa, \cdot)|^p\right\|_{L^{\frac{2}{p}}}& \leq C\|u(\kappa, \cdot)-v(\kappa, \cdot)\|_{L^{2}}\left(\|u(\kappa, \cdot)\|_{L^{2}}^{p-1}+\|v(\kappa, \cdot)\|_{L^{2}}^{p-1}\right) \\
				& =C (1+\kappa)^{-\frac{\gamma}{\nu}  -\frac{\gamma}{\nu} (p-1)}   	\left\{  (1+\kappa)^{\frac{\gamma}{\nu} }  \|u(\kappa, \cdot)-v(\kappa, \cdot)\|_{L^{2}}   \right\}\\
				&\quad \times   \left(  (1+\kappa)^{\frac{\gamma}{\nu} (p-1)} \|u(\kappa, \cdot)\|_{L^{2}}^{p-1}+  (1+\kappa)^{\frac{\gamma}{\nu}(p-1) }\|v(\kappa, \cdot)\|_{L^{2}}^{p-1}\right) \\
				& \leq C' (1+\kappa)^{ -\frac{\gamma p}{\nu}}   \left\| u-v \right\|_{ X_s(T)} \left( \left\| u\right\|_{ X_s(T)}^{p-1}+ \left\| v\right\|_{ X_s(T)}^{p-1}\right) .
			\end{align*}
			Then using the fact that $\frac{\gamma p}{\nu}=\frac{\gamma}{\nu} \left(1+\frac{2\nu}{Q+2\gamma}\right)<1$ for $\gamma <\min\{\tilde{\gamma}, \frac{Q}{2}\}$ with $Q\geq 3,$ we get
			\begin{align}\label{4444}\nonumber
				&	\int_0^{\frac{t}{2}} (1+t-\kappa )^{-\frac{Q}{\nu}\left(\frac{p}{2}-\frac{1}{2}\right)-\frac{sj}{\nu}}  \left\||u|^p-|v|^p\right\|_{L^2   \cap L^{\frac{2}{p}} }   {d} \kappa\\\nonumber
				&	 \leq  C(1+t )^{-\frac{Q(p-1)}{2\nu} -\frac{sj}{\nu}}   \left\| u-v \right\|_{ X_s(T)} \left( \left\| u\right\|_{ X_s(T)}^{p-1}+ \left\| v\right\|_{ X_s(T)}^{p-1}\right) \int_0^{\frac{t}{2}} (1+\kappa)^{ -\frac{\gamma p}{\nu}} {d} \kappa\\
				&	 \leq C' (1+t )^{ -\frac{sj+\gamma}{\nu}}   \left\| u-v \right\|_{ X_s(T)} \left( \left\| u\right\|_{ X_s(T)}^{p-1}+ \left\| v\right\|_{ X_s(T)}^{p-1}\right) .
			\end{align}
			Thus combining all the cases   \eqref{2222}, \eqref{3333}, and \eqref{4444}, the inequality \eqref{1111} becomes
			\begin{align*}
				\| \mathcal{R}^{\frac{sj}{\nu}}(u^{\text{non}}-v^{\text{non}})(t, \cdot ) \|_{L^2}\leq A (1+t )^{ -\frac{sj+\gamma}{\nu}}   \left\| u-v \right\|_{ X_s(T)} \left( \left\| u\right\|_{ X_s(T)}^{p-1}+ \left\| v\right\|_{ X_s(T)}^{p-1}\right), 
			\end{align*}
			for some constant independent of $T.$ This  gives us  
			\begin{align}\label{final22}
				\|u^{\text{non}}-v^{\text{non}}\|_{X_s(T)} \leq A \|u-v\|_{X_s(T)}\left(\|u\|_{X_s(T)}^{p-1}+\|v\|_{X_s(T)}^{p-1}\right)
			\end{align}
			and from the definition of $N$ in \eqref{f2inr}, we have 
			\begin{align}\label{final 2}
				\|N u-N v\|_{X_s(T)}=	\|u^{\text{non}}-v^{\text{non}}\|_{X_s(T)} \leq A \|u-v\|_{X_s(T)}\left[  \|u\|_{X_s(T)}^{p-1}+\|v\|_{X_s(T)}^{p-1}\right],
			\end{align} where the positive constant $A$ is independent of $T$.  
			
			In particular,  from (\ref{final22}),  we also have 
			\begin{align}\label{final222}
				\|u^{\text{non}}\|_{X_s(T)} \leq C \|u\|_{X_s(T)}^p
			\end{align}
			and cobning it with \eqref{2number100}, we can write 
			\begin{align}\label{Final banach}
				\|N u \|_{X_s(T)}=	\| u^{\text{lin}}+u^{\text{non}}\|_{X_s(T)} \leq B\left\|\left(u_{0}, u_{1}\right)\right\|_{ \mathcal{A}^s }+B\|u\|_{X_s(T)}^{p},
			\end{align} 
			for some positive constant $B$ (independent of $T$) with initial data space $\mathcal{A}^s : =(H^s\cap \dot  H^{-\gamma}) \times (L^2\cap \dot  H^{-\gamma})$. 
			Consequently, for  some $r>1$,  we choose $R_0=rB\left\|\left(u_0, u_1\right)\right\|_{\mathcal{A}^s} $ with sufficiently small $\left\|\left(u_0, u_1\right)\right\|_{\mathcal{A}^s}<\varepsilon $  so that  
			$$BR_0^p<\frac{R_0}{r} \quad \text{and} \quad 2AR_0^{p-1}<\frac{1}{r}.$$
			Then     (\ref{final 2})   and (\ref{Final banach}) reduce to  
			\begin{align}\label{Final banach1}
				\|N u\|_{X_s(T)} \leq  \frac{2R_0}{r} 
			\end{align}  
			and \begin{align}\label{Banach11}
				\|N u-N v\|_{X_s(T)} \leq  \frac{1}{r}\|u-v\|_{X_s(T)},			\end{align}
			respectively for all $u, v\in \mathcal{B}(R_0):=\{u\in X_s(T): \|u\|_{X_s(T)}\leq R_0\}$.
			
			Since   $\left\|\left(u_0, u_1\right)\right\|_{\mathcal{A}^s}<\varepsilon $ is sufficiently small,   (\ref{Final banach1}) implies that   $Nu \in X_s(T)$, that is, $N$ maps $X_s(T)$ into itself. 
			On the other hand,   (\ref{Banach11}) implies that the map $N$ is a contraction mapping on the ball $\mathcal{B}(R_0)$ around the origin in the Banach space $X_s(T)$. Using  Banach's fixed point theorem,  we can say that there exists a uniquely determined fixed point $u^* $ of the operator $N$, which means $u^*=Nu^* \in X_s(T)$  for all positive $T$.  This implies that there exists a global-in-time small data Sobolev solution $u^*$ of the equation $ u^*=Nu^* $ in $ X_s(T)$, which also gives the solution to the semilinear damped wave equation (\ref{eq0010}).  Moreover, the fixed point satisfies the estimate $
			\|u^* \|_{X_s(T)} \leq R_0=rB\left\|\left(u_0, u_1\right)\right\|_{\mathcal{A}^s} $. 
			Due to the fact that no constant depends on  time $T$, this argument gives us the existence
			of a unique global-in-time solution in $u \in \mathcal{C}\left([0, \infty), H^s\right)$ and this completes the proof of the theorem.  
		\end{proof}

		\section{Diffusion phenomenon of damped wave equations on $\mathbb{G}$} \label{sec5}
		In this section, we establish the diffusion phenomenon with initial data belonging additionally to Sobolev spaces of negative order.   Particularly, we show how  the diffusion phenomenon bridges decay properties of solutions to the linear  Cauchy problem 
		problem
		\begin{align} \label{Linaer equation}
			\begin{cases}
				u_{tt}+\mathcal{R}u +u_{t} =0, & x\in  \mathbb{G},~t>0,\\
				u(0,x)=  u_0(x),  & x\in  \mathbb{G},\\ u_t(0, x)=  u_1(x), & x\in  \mathbb{G},
			\end{cases}
		\end{align}
		and solutions to the    Cauchy problem for the linear  heat equation  (\ref{heat}).

		Now invoking the group Fourier transform with respect to $x$ on   (\ref{heat}),  for all $\pi \in \widehat{\mathbb{G}}$,   we get   a Cauchy problem related to a parameter-dependent functional differential equation for $ \widehat{u}(t, \pi)$ as follows: 
		\begin{align*}
			\begin{cases}
				\partial_t\widehat{w}(t,\pi)+\pi(\mathcal{R}) \widehat{u}(t,\pi)  =0,& \pi \in\widehat{\G},~t>0,\\ \widehat{w}(0,\pi)=\widehat{u}_0(\pi)+\widehat{u}_1(\pi), &\pi \in\widehat{\G},
			\end{cases} 
		\end{align*}
		where  $\pi(\mathcal{R})$ is the symbol of the Rockland operator $\mathcal{R}$ on ${\mathbb{G}}$.    For $ m, k \in \mathbb{N}$,   we introduce the notation
		\begin{align}\label{basis}
			\widehat{w}(t,  \pi)_{m, k} \doteq\left(\widehat{w}(t,  \pi) e_k, e_{m}\right)_{\mathcal{H}_\pi},
		\end{align}
		where $ \{e_m\}_{m\in \mathbb{N}}$ is  the same orthonormal basis in the representation space $\mathcal{H}_\pi$ that gives   us  (\ref{matrix}). Then $\widehat{w}(t, \pi)_{m, k}$ solves the following  infinite system of  ordinary differential equation with respect to $t$    variable 
		\begin{align}\label{eqq7}
			\begin{cases}
				\partial_t\widehat{w}(t, \pi)_{m, k}+ \beta_{m, \pi}^{2 }  \widehat{w}(t, \pi)_{m, k}= 0,& \pi \in\widehat{\G},~t>0,\\ \widehat{w}(t, \pi)_{m, k}=\widehat{u}_0(\pi)_{m, k}+\widehat{u}_1(\pi)_{m, k} &\pi \in\widehat{\G}, 
			\end{cases}
		\end{align}
		where  $\beta_{m, \pi}^{2 } = \pi_m^2.$  The solution to (\ref{heat}) in the Fourier space can be written by
		\begin{align}\label{heat1}
			\widehat{w}(t, \pi)_{m, k}={e}^{-\beta_{m, \pi}^2 t}\left( \widehat{u}_0(\pi)_{m, k}+\widehat{u}_1(\pi)_{m, k}\right).
		\end{align}
		For 	$\left(u_0, u_1\right) \in\left(
		{	H} ^{s}  \cap \dot 
		{	H}^{-\gamma} \right) \times\left(
		{	H}^{s-1}  \cap  \dot
		{	H}^{-\gamma}  \right)$  with  $s \geq 0$ and $\gamma \in \mathbb{R}$ such that $s+\gamma \geq  0$,  proceeding similarly as in  the proof of   \cite[Theorem 1.1]{DKMR24},  we  have  the following 
		$ \dot
		{	H} ^{s}  $-decay estimate  for the solution to the  Cauchy problem  (\ref{heat}) as
		\begin{align}\label{heat2}
			\|w(t, \cdot)\|_{\dot
				{	H} ^{s}  } \lesssim(1+t)^{-\frac{s+\gamma}{\nu}}\left(\left\|u_0\right\|_{
				{	H}^{s}   \cap \dot
				{	H} ^{-\gamma} }+\left\|u_1\right\|_{
				{	H}^{s-1}  \cap \dot
				{	H} ^{-\gamma} }\right)
		\end{align}
		for any $t\geq 0$.

		Before going to discuss how the diffusion phenomenon bridges decay properties of solutions to the  linear damped wave equation (\ref{Linaer equation}) and solutions to the   linear  heat equation (\ref{heat}),   we recall, from \cite{DKMR24},  that the  solution to  the linear  problem (\ref{Linaer equation})  can be written in the Fourier space as
		\begin{align}\label{Asymptotic exp} 
			\widehat{u}(t, \pi)_{m, k}&= K_0(t,  \pi)_{m,k} \widehat{u}_0( \pi)_{m, k}+ K_1(t,  \pi)_{m,k}\widehat{u}_1( \pi)_{m,k},
		\end{align}
		where
		\begin{align}\label{shyam}  
			K_0(t,  \pi)_{m,k}=
			\begin{cases}
				\frac{\left(-1+\mathcal{O}(\beta_{m, \pi}^2)\right){e}^{\left(-\beta_{m, \pi}^{2 }+\mathcal{O} (\beta_{m, \pi}^{4} )\right)t}-\left(-\beta_{m, \pi}^{2 }+\mathcal{O} (\beta_{m, \pi}^{4})\right){e}^{\left(-1 +\mathcal{O} (\beta_{m, \pi}^{2} )\right)t}}{-1+\mathcal{O}\left(\beta_{m, \pi}^{2 }\right)} & \text { for }|\beta_{m, \pi}|<\delta,  \\\\ 	\frac{\left(i|\beta_{m, \pi}|-\frac{1}{2}+\mathcal{O}\left(|\beta_{m, \pi}|^{-1}\right)\right)  {e}^{\left(-i|\beta_{m, \pi}|-\frac{1}{2}+\mathcal{O}\left(|\beta_{m, \pi}|^{-1}\right)\right) t}}{2 i|\beta_{m, \pi}|+\mathcal{O}(1)} &  \\\\
				\qquad 	-\frac{\left(-i|\beta_{m, \pi}|-\frac{1}{2}+\mathcal{O}\left(|\beta_{m, \pi}|^{-1}\right)\right)  {e}^{\left(i|\beta_{m, \pi}|-\frac{1}{2}+\mathcal{O}\left(|\beta_{m, \pi}|^{-1}\right)\right) t}}{2 i|\beta_{m, \pi}|+\mathcal{O}(1)}
				& \text { for }|\beta_{m, \pi}|>N .
			\end{cases} 
		\end{align}
		and
		\begin{align} \label{shyam2}
			K_1(t,  \pi)_{m,k}
			=\left\{\begin{array}{ll}
				\frac{ {e}^{\left(-1+\mathcal{O}\left(\beta_{m, \pi}^2\right)\right) t}- {e}^{\left(-\beta_{m, \pi}^2+\mathcal{O}\left(\beta_{m, \pi}^4\right)\right) t}}{-1+\mathcal{O}\left(\beta_{m, \pi}^2\right)} & \text { for }|\beta_{m, \pi}|<\delta, \\\\
				\frac{ {e}^{\left(i|\beta_{m, \pi}|-\frac{1}{2}+\mathcal{O}\left(|\beta_{m, \pi}|^{-1}\right)\right) t}- {e}^{\left(-i|\beta_{m, \pi}|-\frac{1}{2}+\mathcal{O}\left(|\beta_{m, \pi}|^{-1}\right)\right) t}}{2 i|\beta_{m, \pi}|+\mathcal{O}(1)}  & \text { for }|\beta_{m, \pi}|>N .
			\end{array}\right.
		\end{align}
		Furthermore, from    \cite[Theorem 1.1]{DKMR24},   we have the following  	$ \dot
		{	H} ^{s}$-decay estimate for the solution  to (\ref{Linaer equation})
		\begin{align}\label{hom}
			\|u(t, \cdot)\|_{ \dot{H} ^s} \lesssim(1+t)^{-\frac{s+\gamma}{\nu}}\left(\left\|u_0\right\|_{H^s  \cap \dot{H}^{-\gamma} }+\left\|u_1\right\|_{H ^{s-1} \cap \dot{H} ^{-\gamma}}\right) ,
		\end{align}
		for any $t\geq 0$. 
		
		From  (\ref{heat2}) and (\ref{hom}), we see that the solutions to the linear damped wave equation (\ref{Linaer equation})   and for the heat equation (\ref{heat}) satisfy the same decay estimates with slightly different regularity of initial data. This observation allows us to investigate decay properties for the difference of the solutions  for the wave equation (\ref{Linaer equation})  and for the heat equation (\ref{heat}) in $\dot	{	H}_{{\Delta_{\mathbb{H}}}} ^{s} $. 
		\begin{theorem} Let ${\mathbb{G}}$ be a graded Lie group of homogeneous dimension $Q$ and let $\mathcal{R}$ be a positive Rockland operator of homogeneous degree $\nu \geq 2.$
			Let	$\left(u_0, u_1\right) \in\left(
			{	H} ^{s}  \cap \dot 
			{	H} ^{-\gamma}   \right) \times\left(
			{	H} ^{s}   \cap  \dot
			{	H} ^{-\gamma}   \right)$  with  $s \geq 0$ and $\gamma \in \mathbb{R}$ such that   $s+\gamma+\nu \geqslant 0$. Let $u$ and $w$ be the solutions to the linear Cauchy problems (\ref{Linaer equation}) and  (\ref{heat}), respectively. Then, $u-w$ satisfies
			\begin{align}\label{final}
				\|u(t, \cdot)-w(t, \cdot)\|_{
					\dot	{	H} ^{s}  } \lesssim(1+t)^{-\frac{s+\gamma}{\nu}-1}\left(\left\|u_0\right\|_{
					{	H}^{s}   \cap \dot
					{	H} ^{-\gamma}  }+\left\|u_1\right\|_{
					{	H}^{s-1}   \cap \dot
					{	H} ^{-\gamma}  }\right) .
			\end{align}
		\end{theorem}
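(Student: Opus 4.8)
The plan is to pass to the Fourier side and exploit the explicit multiplier representations \eqref{heat1}, \eqref{Asymptotic exp}, \eqref{shyam}, \eqref{shyam2} already recorded. By the Plancherel identity together with the diagonalisation \eqref{matrix}, the left-hand side becomes
$$\|u(t,\cdot)-w(t,\cdot)\|_{\dot{H}^s}^2 = \int_{\widehat{\mathbb{G}}} \sum_{m,k} \beta_{m,\pi}^{4s/\nu}\, \bigl| \widehat{u}(t,\pi)_{m,k} - \widehat{w}(t,\pi)_{m,k}\bigr|^2\, d\mu(\pi),$$
and from \eqref{heat1} and \eqref{Asymptotic exp} the entrywise difference factors through the two scalar multipliers
$$\widehat{u}(t,\pi)_{m,k} - \widehat{w}(t,\pi)_{m,k} = \bigl(K_0 - e^{-\beta_{m,\pi}^2 t}\bigr)\widehat{u}_0(\pi)_{m,k} + \bigl(K_1 - e^{-\beta_{m,\pi}^2 t}\bigr)\widehat{u}_1(\pi)_{m,k}.$$
Thus everything reduces to pointwise (in $\beta_{m,\pi}$) bounds on $K_j - e^{-\beta_{m,\pi}^2 t}$ followed by a weighted integration. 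I would split the spectrum into the low band $|\beta_{m,\pi}| < \delta$, a compact intermediate band $\delta \le |\beta_{m,\pi}| \le N$, and the high band $|\beta_{m,\pi}| > N$.

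The heart of the matter is the low band. There $K_0$ and $K_1$ are governed by the two characteristic roots $\lambda_\pm$ of $\lambda^2 + \lambda + \beta_{m,\pi}^2 = 0$, with $\lambda_+ = -\beta_{m,\pi}^2 + \mathcal{O}(\beta_{m,\pi}^4)$ and $\lambda_- = -1 + \mathcal{O}(\beta_{m,\pi}^2)$. The diffusive root $\lambda_+$ reproduces the heat multiplier $e^{-\beta_{m,\pi}^2 t}$ to leading order, and I would extract from \eqref{shyam}--\eqref{shyam2} the gain
$$\bigl|K_j - e^{-\beta_{m,\pi}^2 t}\bigr| \lesssim \beta_{m,\pi}^2\, e^{-c\beta_{m,\pi}^2 t} + e^{-ct}, \qquad j = 0,1,$$
i.e.\ the difference picks up a full extra power $\beta_{m,\pi}^2$ over the individual multipliers. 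Two mechanisms produce this: the mismatch of the prefactors is already $\mathcal{O}(\beta_{m,\pi}^2)$, while $e^{\lambda_+ t} - e^{-\beta_{m,\pi}^2 t} = e^{-\beta_{m,\pi}^2 t}\bigl(e^{\mathcal{O}(\beta_{m,\pi}^4)t} - 1\bigr)$ is $\mathcal{O}\bigl(\beta_{m,\pi}^4 t\, e^{-\beta_{m,\pi}^2 t}\bigr)$, which I absorb through $t\, e^{-c\beta_{m,\pi}^2 t} \lesssim \beta_{m,\pi}^{-2}$; the $\lambda_-$ contributions are exponentially small and feed the $e^{-ct}$ term. Squaring, factoring out the weight $\beta_{m,\pi}^{-4\gamma/\nu}$ to recover the $\dot{H}^{-\gamma}$ norms of the data, and estimating
$$\sup_{\beta > 0} \beta^{\,4(s+\gamma)/\nu + 4}\, e^{-2c\beta^2 t} \lesssim (1+t)^{-2(s+\gamma)/\nu - 2}$$
then yields exactly the claimed extra factor $(1+t)^{-1}$. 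It is at this last step that the hypothesis $s + \gamma + \nu \ge 0$ enters: it guarantees that the exponent $4(s+\gamma)/\nu + 4 = \tfrac{4}{\nu}(s+\gamma+\nu)$ is nonnegative, so that the supremum over $\beta > 0$ is finite and decays at the stated rate.

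For the remaining bands the analysis is softer and produces exponentially decaying remainders. On the intermediate band the roots $\lambda_\pm$ have real parts bounded away from zero, so $K_0$, $K_1$ and $e^{-\beta_{m,\pi}^2 t}$ are all $\mathcal{O}(e^{-ct})$; on the high band the oscillatory branches of \eqref{shyam}--\eqref{shyam2} give $|K_0| \lesssim e^{-t/2}$ and $|K_1| \lesssim |\beta_{m,\pi}|^{-1} e^{-t/2}$, while $e^{-\beta_{m,\pi}^2 t} \le e^{-N^2 t}$ is super-exponentially small. Here I would use the $H^s$ norm of $u_0$ and the $H^{s-1}$ norm of $u_1$, observing that the factor $|\beta_{m,\pi}|^{-1}$ attached to $K_1$ is compatible with the loss of one derivative precisely because $\nu \ge 2$ (so $\beta^{4s/\nu - 2} \lesssim \beta^{4(s-1)/\nu}$ for $|\beta| > N$). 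Since exponential decay dominates any polynomial rate, these contributions are absorbed into the right-hand side of \eqref{final}. Collecting the three bands and taking square roots gives the theorem. I expect the low-band gain of the factor $\beta_{m,\pi}^2$ --- in particular the handling of the $\mathcal{O}(\beta^4 t\,e^{-\beta^2 t})$ term and the role of the constraint $s+\gamma+\nu \ge 0$ in the supremum --- to be the main technical obstacle.
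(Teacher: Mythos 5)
Your proposal is correct and follows essentially the same route as the paper: pass to the Fourier side, compare the multipliers $K_0,K_1$ with the heat multiplier $e^{-\beta_{m,\pi}^2 t}$ band by band, extract the low-frequency gain $|K_j-e^{-\beta_{m,\pi}^2 t}|\lesssim \beta_{m,\pi}^2 e^{-c\beta_{m,\pi}^2 t}+e^{-ct}$ via the Newton--Leibniz identity $e^{(-\beta^2+\mathcal{O}(\beta^4))t}-e^{-\beta^2 t}=e^{-\beta^2 t}\mathcal{O}(\beta^4)\int_0^t e^{\mathcal{O}(\beta^4)s}\,ds$, and convert the extra $\beta^2$ into the extra decay $(1+t)^{-1}$ through the weighted supremum where $s+\gamma+\nu\ge 0$ is used. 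Your explicit treatment of the intermediate band and of the role of $\nu\ge 2$ in absorbing the $|\beta_{m,\pi}|^{-1}$ attached to $K_1$ only fills in details the paper delegates to the cited linear estimates, so there is no substantive difference.
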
 
		\begin{proof}
			First we notice that for $|\beta_{m, \pi}|<\varepsilon \ll 1$, we see that 
			\begin{align*}
				&	\left| K_0(t,  \pi)_{m,k} - {e}^{-\beta_{m, \pi}^2 t}\right|\\
				&=\left| 	\frac{\left(-1+\mathcal{O}(\beta_{m, \pi}^2)\right){e}^{\left(-\beta_{m, \pi}^{2 }+\mathcal{O} (\beta_{m, \pi}^{4} )\right)t}-\left(-\beta_{m, \pi}^{2 }+\mathcal{O} (\beta_{m, \pi}^{4})\right){e}^{\left(-1 +\mathcal{O} (\beta_{m, \pi}^{2} )\right)t}}{-1+\mathcal{O}\left(\beta_{m, \pi}^{2 }\right)} - {e}^{-\beta_{m, \pi}^2t}\right|\\
				&	\lesssim \left| 	\frac{\left(-1+\mathcal{O}(\beta_{m, \pi}^2)\right){e}^{\left(-\beta_{m, \pi}^{2 }+\mathcal{O} (\beta_{m, \pi}^{4} )\right)t}}{-1+\mathcal{O}\left(\beta_{m, \pi}^{2 }\right)} - {e}^{-\beta_{m, \pi}^2t}\right|+e^{-ct}\\
				&	= \left| 	 {e}^{\left(-\beta_{m, \pi}^{2 }+\mathcal{O} (\beta_{m, \pi}^{4} )\right)t} - {e}^{-\beta_{m, \pi}^2t}\right|+e^{-ct}\\
			\end{align*}
			and 
			\begin{align*}
				\left| K_1(t,  \pi)_{m,k} - {e}^{-\beta_{m, \pi}^2t}\right|
				&=\left| 	\frac{ {e}^{\left(-1+\mathcal{O}\left(\beta_{m, \pi}^2\right)\right) t}- {e}^{\left(-\beta_{m, \pi}^2+\mathcal{O}\left(\beta_{m, \pi}^4\right)\right) t}}{-1+\mathcal{O}\left(\beta_{m, \pi}^2\right)} - {e}^{-\beta_{m, \pi}^2t}\right|\\
				&	\lesssim \left| 	\frac{-{e}^{\left(-\beta_{m, \pi}^{2 }+\mathcal{O} (\beta_{m, \pi}^{4} )\right)t}}{-1+\mathcal{O}\left(\beta_{m, \pi}^{2 }\right)} - {e}^{-\beta_{m, \pi}^2t}\right|+e^{-ct}
			\end{align*}
			for some positive constant $c$. 	Similarly, 	 for $|\beta_{m, \pi}|	\gg   1$, we see that 
			\begin{align*}
				&	\left| K_0(t,  \pi)_{m,k} - {e}^{-\beta_{m, \pi}^2 t}\right|\\
				&\leq \left| 		\frac{\left(i|\beta_{m, \pi}|-\frac{1}{2}+\mathcal{O}\left(|\beta_{m, \pi}|^{-1}\right)\right)  {e}^{\left(-i|\beta_{m, \pi}|-\frac{1}{2}+\mathcal{O}\left(|\beta_{m, \pi}|^{-1}\right)\right) t}}{2 i|\beta_{m, \pi}|+\mathcal{O}(1)} - {e}^{-\beta_{m, \pi}^2t}\right|\\& \quad +\left| 	\frac{\left(-i|\beta_{k \lambda}|-\frac{1}{2}+\mathcal{O}\left(|\beta_{m, \pi}|^{-1}\right)\right)  {e}^{\left(i|\beta_{m, \pi}|-\frac{1}{2}+\mathcal{O}\left(|\beta_{m, \pi}|^{-1}\right)\right) t}}{2 i|\beta_{m, \pi}|+\mathcal{O}(1)}- {e}^{-\beta_{m, \pi}^2t}\right|\\
				&\leq  	  {e}^{\left( -\frac{1}{2}+\mathcal{O}\left(|\beta_{m, \pi}|^{-1}\right)\right) t}  + {e}^{- t} + {e}^{\left(-\frac{1}{2}+\mathcal{O}\left(|\beta_{m, \pi}|^{-1}\right)\right) t} + {e}^{- t} \\&\lesssim e^{-ct},
			\end{align*}
			and 
			\begin{align*}
				\left| K_1(t,  \pi)_{m,k} - {e}^{-\beta_{m, \pi}^2t}\right|
				&=\left| 	\frac{ {e}^{\left(i|\beta_{m, \pi}|-\frac{1}{2}+\mathcal{O}\left(|\beta_{m, \pi}|^{-1}\right)\right) t}- {e}^{\left(-i|\beta_{m, \pi}|-\frac{1}{2}+\mathcal{O}\left(|\beta_{m, \pi}|^{-1}\right)\right) t}}{2 i|\beta_{m, \pi}|+\mathcal{O}(1)}  - {e}^{-\beta_{m, \pi}^2t}\right|\\
				&\leq \frac{ {e}^{\left(-\frac{1}{2}+\mathcal{O}\left(|\beta_{m, \pi}|^{-1}\right)\right) t}+ {e}^{\left(-\frac{1}{2}+\mathcal{O}\left(|\beta_{m, \pi}|^{-1}\right)\right) t}}{\sqrt{4\beta_{m, \pi}^2+\mathcal{O}(1)^2}}  + {e}^{-t}\\
				&\lesssim  (|\beta_{m, \pi}|+1)^{-1}   {e}^{-ct}
			\end{align*}
			for some positive constant $c$.
			
			Now define the cut-off function $\chi_{\text {int }}$ with    its support in $\{ 	\beta_{m, \pi} \in \mathbb{R}^*: |\beta_{m, \pi}|<\varepsilon \ll 1\}$. Then using the above estimates,  directly we have 
			\begin{align}\label{heat111}\nonumber
				&\left|\widehat{u}(t,\lambda)_{kl}-	\widehat{w}(t,\lambda)_{kl}\right| \\\nonumber
				&= | \left( K_0(t,  \pi)_{m,k}-{e}^{-\beta_{m, \pi}^2t}\right)  \widehat{u}_0(\pi)_{m, k}+\left( K_1(t,  \pi)_{m,k}-{e}^{-\beta_{m, \pi}^2 t}\right) \widehat{u}_1(\pi)_{m, k} |\\	\nonumber
				&	= \chi_{\text {int }} | \left( K_0(t,  \pi)_{m,k}-{e}^{-\beta_{m, \pi}^2t}\right)  \widehat{u}_0(\pi)_{m, k}+\left( K_1(t,  \pi)_{m,k}-{e}^{-\beta_{m, \pi}^2t}\right) \widehat{u}_1(\pi)_{m, k} |\\\nonumber
				&\qquad +(1-\chi_{\text {int }}) | \left( K_0(t,  \pi)_{m,k}-{e}^{-\beta_{m, \pi}^2t}\right)  \widehat{u}_0(\pi)_{m, k}+\left( K_1(t,  \pi)_{m,k}-{e}^{-\beta_{m, \pi}^2t}\right) \widehat{u}_1(\pi)_{m, k} |\\\nonumber
				& 		\lesssim \chi_{\text {int }} \left( \left| 	 {e}^{\left(-\beta_{m, \pi}^{2 }+\mathcal{O} (\beta_{m, \pi}^{4} )\right)t} - {e}^{-\beta_{m, \pi}^2t}\right|+e^{-ct} \right)\left| \widehat{u}_0(\pi)_{m, k}\right| \\\nonumber
				& \qquad + \chi_{\text {int }} \left( \left| 	\frac{-{e}^{\left(-\beta_{m, \pi}^{2 }+\mathcal{O} (\beta_{m, \pi}^{4} )\right)t}}{-1+\mathcal{O}\left(\beta_{m, \pi}^{2 }\right)} - {e}^{-\beta_{m, \pi}^2t}\right|+e^{-ct}				
				\right)\left| \widehat{u}_1(\pi)_{m, k}\right| \\
				\nonumber
				& \qquad+\left(1-\chi_{\text {int}}\right)  {e}^{-c t}\left| \widehat{u}_0(\pi)_{m, k}\right| \\&\qquad +\left(1-\chi_{\text {int }}(\xi)\right) (|\beta_{m, \pi}|+1)^{-1}   {e}^{-ct} \left| \widehat{u}_1(\pi)_{m, k}\right| .
			\end{align}		
			For $|\beta_{m, \pi}|<\varepsilon \ll 1$, using the Newton-Leibniz integral formula,  we have
			$$
			\begin{aligned}
				{e}^{\left(-\beta_{m, \pi}^{2 }+\mathcal{O} (\beta_{m, \pi}^{4} )\right)t} - {e}^{-\beta_{m, \pi}^2t} & = {e}^{-\beta_{m, \pi}^2t}  \mathcal{O} (\beta_{m, \pi}^{4})   \int_0^t   e^{\mathcal{O} (\beta_{m, \pi}^{4} ) s } ~ds\\&
				= {e}^{-\beta_{m, \pi}^2t}  \mathcal{O} (\beta_{m, \pi}^{4})   \int_0^t   e^{\mathcal{O} (\beta_{m, \pi}^{2} ) s } ~ds\\
				& \leq C  \beta_{m, \pi}^{2}  {e}^{-c\beta_{m, \pi}^2t},
			\end{aligned}
			$$ where $C$ is a positive constant. 			Thus from (\ref{heat111}), we  obtain  
			\begin{align}\label{coefficient} \nonumber
				\left|\widehat{u}(t,\lambda)_{kl}-	\widehat{w}(t,\lambda)_{kl}\right| 
				& 		\lesssim \chi_{\text {int }} \left(  \beta_{m, \pi}^{2}  {e}^{-c\beta_{m, \pi}^2t}+e^{-ct} \right)\left( \left| \widehat{u}_0(\pi)_{m, k}\right|+\left| \widehat{u}_1(\pi)_{m, k}\right| \right)   \\&\qquad +\left(1-\chi_{\text {int }}\right)\left( \left| \widehat{u}_0(\pi)_{m, k}\right|+\left| \widehat{u}_1(\pi)_{m, k}\right| \right).
			\end{align}
			Now following same procedure  as in  the proof of  in  \cite[Theorem 1.1]{DKMR24},  we can conclude that 
			$$
			\|u(t, \cdot)-w(t, \cdot)\|_{
				\dot	{	H} ^{s} } \lesssim(1+t)^{-\frac{s+\gamma}{\nu}-1}\left(\left\|u_0\right\|_{
				{	H}^{s}  \cap \dot
				{	H} ^{-\gamma} }+\left\|u_1\right\|_{
				{	H}^{s-1} \cap \dot
				{	H} ^{-\gamma}  }\right),
			$$ completing the proof.  \end{proof}
		We may immediately make the following observations based on the preceding finding.   \begin{rem}
			Based on the results obtained from (\ref{final}), it is evident that the decay rate is significantly higher compared to (\ref{hom}) and (\ref{heat2}).   This is due to the    additional coefficient  $\beta_{m, \pi}^{2} $  with the factor $  {e}^{-c\beta_{m, \pi}^2t}$ in  (\ref{coefficient}). 
		\end{rem}
		\begin{rem}
			Concerning the decay rate in (\ref{final}), we see that the decay is enhanced by a factor of $(1+t)^{-1}$ when we subtract the solution to wave equation (\ref{Linaer equation}) by the solution to heat equation (\ref{heat}).  This concludes that the diffusion phenomenon is also valid in the framework of the negative order Sobolev  space $\dot{H}^{-\gamma}$. 
		\end{rem}

		\section*{Acknowledgement}
		VK, MR and BT are supported by the FWO Odysseus 1 grant G.0H94.18N: Analysis and Partial Differential Equations, the Methusalem program of the Ghent University Special Research Fund (BOF) (Grant number 01M01021). VK and MR are also supported by FWO Senior Research Grant G011522N. MR is also supported by EPSRC grant EP/V005529/1.  SSM is supported by the DST-INSPIRE Faculty Fellowship DST/INSPIRE/04/2023/002038. BT is also supported by the Science Committee of the Ministry of Education and Science of the Republic of Kazakhstan (Grant No. AP14869090).
		
		%
		%
		%
		%
		%
		%
		%


	\end{document}